\documentclass[12pt]{amsart}
\usepackage{amsmath}
\usepackage{palatino}

\usepackage{amsfonts}
\usepackage{amsthm}
\usepackage{amssymb}
\usepackage{amscd}
\usepackage[all]{xy}
\usepackage{enumerate}

\textheight22truecm
\textwidth17truecm
\oddsidemargin-0.5truecm
\evensidemargin-0.5truecm

\keywords{Vector bundles, Chern classes, fibrations, finite covers} 

\pagestyle{myheadings}

\theoremstyle{plain}

\newtheorem{thm}{Theorem}[section]

\newtheorem{prop}[thm]{Proposition}

\newtheorem{cor}[thm]{Corollary}
\newtheorem{lem}[thm]{Lemma}

\theoremstyle{definition}
\newtheorem{defn}[thm]{Definition}

\newtheorem{expl}[thm]{Example}

\newtheorem*{ackn}{Acknowledgment}

\newtheorem{rmk}[thm]{Remark}
\newcommand{\sA}{\mathcal{A}}
\newcommand{\sB}{\mathcal{B}}

\newcommand{\sE}{\mathcal{E}}
\newcommand{\sF}{\mathcal{F}}
\newcommand{\sG}{\mathcal{G}}

\newcommand{\sI}{\mathcal{I}}

\newcommand{\sL}{\mathcal{L}}

\newcommand{\sM}{\mathcal{M}}
\newcommand{\sO}{\mathcal{O}}

\newcommand{\sT}{\mathcal{T}}

\newcommand{\sV}{\mathcal{V}}

\newcommand{\mE}{\mathbb{E}}
\newcommand{\mF}{\mathbb{F}}

\newcommand{\mK}{\mathbb{K}}

\newcommand{\mP}{\mathbb{P}}
\newcommand{\mQ}{\mathbb{Q}}
\newcommand{\mT}{\mathbb{T}}

\newcommand{\lp}{\Lambda_\pi}
\newcommand{\corrlp}{ \Lambda_\pi }

\newcommand{\tsc}{Tschirnhausen}
\numberwithin{equation}{section}

\newcommand{\beba} {\begin{equation}\begin{array}{rcl}}

\newcommand{\eaee} {\end{array}\end{equation}}
%

\title{Generically nef vector bundles on ruled surfaces}


\begin{document}

\maketitle


\begin{center}
{ Beorchia Valentina$^*$ and Zucconi Francesco$^{**}$}
\end{center}

\bigskip
\begin{small}
\begin{center}{$^*$ Dipartimento di Matematica e Geoscienze,\\
  Universit\`a di Trieste,
via Valerio 12/b, 34127 Trieste, Italy\\
\it{beorchia@units.it}}
\end{center}
\end{small}

\medskip
\begin{small}
\begin{center}{$^{**}$Dipartimento di Scienze Matematiche, Informatiche e Fisiche, \\
Universit\`a degli studi di Udine\\
33100 Udine, Italy\\
\it{francesco.zucconi@uniud.it}}
\end{center}
\end{small}

\begin{abstract} The present paper concerns the invariants of generically nef vector bundles on ruled surfaces.
By Mehta - Ramanathan Restriction Theorem and by Miyaoka characterization of semistable vector bundles on a curve, the generic nefness can be considered as a weak form of semistability.
We establish a Bogomolov type inequality for generically nef vector bundles with nef general fiber restriction on ruled surfaces with no negative section, see Theorem \ref{valeviehweg_nonbal}.
This gives an affermative answer in this case to a problem posed by Th. Peternell
in \cite{P}. 

Concerning ruled surfaces with a negative section, we prove a a similar result for generically nef vector bundles, with nef and balanced general fiber restriction and with a numerical condition on first Chern class, which is satisfied, for instance, if in its class there is a reduced divisor, see Theorem \ref{nuovo_balan}.

Finally, we use such results to bound the invariants of curve fibrations, which factorize through finite covers of ruled surfaces.
\end{abstract}

\tableofcontents


\section{Introduction}

The present paper concerns the invariants of generically nef vector bundles on ruled surfaces, see Definition \ref{gennef}.
By Mehta - Ramanathan Restriction Theorem \ref{mr} and by Miyaoka characterization of semistable vector bundles on a curve, Theorem \ref{miy}, the generic nefness can be considered as a weak form of semistability, see Corollary \ref{cormiy}.

A question about a possible relation between generic nefness and 
Bogomolov type inequality has been posed by Th. Peternell in the paper \cite[remarks after Theorem 3.8]{P}. By considering the Harder - Narasimhan filtration of a vector bundle $\sE$ on a projective surface, Miyaoka proved the inequality
$
c_2 (\sE) \ge 0,
$
provided that $\sE$ is generically nef and $c_1(\sE)$ is nef.

In the present paper, under the hypotheses of generic nefness and the nefness of the generic fiber restriction, 
we give an affirmative answer to Peternell's question for 
ruled surfaces with invariant $e =-C_0^2 \le 0$ (
see Theorem \ref{valeviehweg_nonbal}):

\begin{thm}\label{valeviehweg_nonbal} 
Let $Y$ be a ruled surface on a smooth curve $B$ with invariant $e=-C_0^2\le 0$.
Let $\sE$ be a generically nef vector bundle of rank $r$ on $Y$ with nef generic fiber restriction.
 Then
$$
c_2 (\sE) \ge  {\sum _{i=1}^{r-1} a_i \over 2a} c_1(\sE)^2,
$$
where $c_1(\sE)\equiv aC_0+\delta L$ and $(a_1,\dots,a_r)$ is the generic splitting type of $\sE$.

\end{thm}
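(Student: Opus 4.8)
The plan is to compare $\sE$ with a full flag of saturated subsheaves adapted to the ruling $\pi\colon Y\to B$ and to convert the generic nefness of $\sE$ into positivity of the successive quotients. First I would order the generic splitting type increasingly, $a_1\le\cdots\le a_r$, so that $a_r=\max_i a_i$. Starting from the relative Harder--Narasimhan filtration of $\sE$ for $\pi$ and saturating, I would build a filtration $0=\sF_0\subset\sF_1\subset\cdots\subset\sF_r=\sE$ whose restriction to a general fibre $f$ is the flag of $\sE|_f\cong\bigoplus_i\sO_{\mP^1}(a_i)$ spanned by the summands of largest degree. Then each $\sL_k:=\sF_k/\sF_{k-1}$ is, away from a finite set, a line bundle with $\sL_k\cdot f=a_{r-k+1}$, and each quotient $\sE\twoheadrightarrow\sE/\sF_k$ has generic splitting type $(a_1,\dots,a_{r-k})$. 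Writing $c_1(\sL_k)\equiv m_kC_0+\beta_kL$ with $m_k=a_{r-k+1}$, the Whitney formula for the flag (the torsion introduced by saturation contributing nonnegatively) gives $c_1(\sE)\equiv aC_0+\delta L$ and $c_2(\sE)\ge\sum_{i<j}c_1(\sL_i)\cdot c_1(\sL_j)$. Substituting $C_0^2=-e$, $C_0\cdot L=1$, $L^2=0$ reduces the theorem to the elementary inequality
\[
\frac e2\sum_{i=1}^r a_i(a_i-a_r)+\sum_{k=1}^{r-1}(m_k-m_{k+1})\,T_k\ \ge\ 0,
\]
where $T_k:=\sum_{j>k}\beta_j$ is the coefficient of $L$ in $c_1(\sE/\sF_k)$.

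The positivity input is supplied next. A quotient of a generically nef bundle is generically nef, so every $\sE/\sF_k$ is generically nef and hence $c_1(\sE/\sF_k)$ is nonnegative on every nef class. On a ruled surface with $e\le 0$ there is no negative section, so the pseudoeffective cone coincides with the nef cone $\{pC_0+qL:\ p\ge 0,\ q\ge\tfrac e2 p\}$. Applying this to $c_1(\sE/\sF_k)$, whose $C_0$--coefficient $P_k:=\sum_{i\le r-k}a_i$ is $\ge 0$ by the nef fibre restriction ($a_i\ge 0$), yields precisely the tail estimates $T_k\ge\tfrac e2 P_k$.

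Finally I would combine the two ingredients by Abel summation. Since the splitting type is increasing, $m_k-m_{k+1}=a_{r-k+1}-a_{r-k}\ge 0$, so each tail estimate enters with the right sign and $\sum_{k=1}^{r-1}(m_k-m_{k+1})T_k\ge\tfrac e2\sum_{k=1}^{r-1}(m_k-m_{k+1})P_k$. A direct check (the extremal case $\sL_k\equiv m_k(C_0+\tfrac e2 L)$, in which all intersection products vanish, makes it transparent) gives the identity $\sum_i a_i(a_i-a_r)+\sum_{k=1}^{r-1}(m_k-m_{k+1})P_k=0$; hence the left-hand side of the reduced inequality is $\ge\tfrac e2\cdot 0=0$ because $e\le 0$, and the theorem follows. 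I expect the genuine obstacle to lie not in this combinatorics but in the first step: making the adapted flag rigorous, so that the quotients $\sE/\sF_k$ really are generically nef and the discarded $c_2$--corrections keep the favourable sign, and in pinning down the exact meaning of Definition \ref{gennef} needed to force $c_1(\sE/\sF_k)$ into the pseudoeffective cone — which is exactly where the hypothesis $e\le 0$ is indispensable.
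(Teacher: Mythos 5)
Your argument is correct, and it rests on the same two pillars as the paper's proof: a push--pull (Brosius-type) decomposition of $\sE$ adapted to the ruling, and the fact that for $e\le 0$ generic nefness of each quotient forces its first Chern class into the cone $\{pC_0+qL:\ p\ge 0,\ q\ge \tfrac{e}{2}p\}$, which for $e\le 0$ is simultaneously the nef and the pseudoeffective cone. The difference is in the bookkeeping, and it is a real gain in transparency: the paper peels off only the top isotypic piece $p^\star p_\star(\sE(-a_rC_0))(a_rC_0)$, applies the theorem inductively to the rank-$q$ quotient $\sM$, and therefore needs the rank-two case of \cite{BeZ} as the base of the induction; you refine to a full flag with rank-one graded pieces and run the estimate in one shot, the induction being replaced by the Abel-summation identity $\sum_i a_i(a_i-a_r)+\sum_k(m_k-m_{k+1})P_k=0$, which explains the cancellation of the $C_0^2$-terms (in the paper this emerges only from the explicit coefficient computation in (\ref{ecco})) and also covers $r=2$, so no external base case is needed. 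The flag you are worried about is unproblematic: it is obtained by iterating exactly the paper's push--pull step, at each stage mapping in $p^\star\sL'(a'C_0)$ for a sub-line-bundle $\sL'\subset p_\star(\sE'(-a'C_0))$ of the current quotient $\sE'$ of top fibre degree $a'$, and saturating; a nonzero map $\sO_{\mP^1}(a')\to\sE'|_L$ is a split injection onto a summand, so the quotients do have the asserted generic splitting types and remain generically nef. Two small points close your own caveats: the torsion absorbed by saturation is supported on jumping lines, hence vertical, so it does not alter the fibre degrees $m_k$ (it may alter the $\beta_k$, but you only ever use the tail sums $T_k$, which are read off from the saturated quotients, for which $c_1(\sE/\sF_k)=P_kC_0+T_kL$ holds exactly by additivity of $c_1$); and the discarded $c_2$-corrections are the lengths of the zero-dimensional schemes in the torsion-free rank-one graded pieces $\sL_k\otimes\sI_{Z_k}$, hence nonnegative. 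With those remarks made explicit your proof is complete.
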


In the $e>0$ case, we prove a similar bound under two additional assumptions,
namely that
the general fiber restriction is balanced, and $
c_1(\sE)\cdot C_0 \ge -{e\over 2}
$,
see Theorem \ref{nuovo_balan}. The last condition is satisfied, for instance, if 
$c_1(\sE)$ is nef or if $c_1(\sE)$
is effective and $C_0$ is not contained in the base locus of $|2\ c_1(\sE) - C_0|$; this assumption
is typically satisfied by
the \tsc\ sheaf associated with a finite cover of smooth surfaces with reduced branch divisor.
Our results allow us to obtain some bounds on the invariants of 
fibred surfaces factoring through finite covers. In these specific cases the bounds found are better
than the recent bounds found by X. Lu and K. Zuo in \cite{LZ}. Moreover, in the case of primitive cyclic covers $ \pi :S \to Y$,
we obtain the same bound $\lambda_{g,0,n}$ given in \cite [Remark 4.4]{E}, see Theorem \ref{primcyc}.

The techniques involved concern vector bundles and algebraic surfaces techniques. We believe that our approach can be the starting point for further reaserch in the theory of vector bundles on fibred surfaces in general.

\begin{ackn}
This research is supported by national MIUR funds, 
PRIN project {\it Geometria delle variet\`a algebriche} (2015), 
coordinator A. Verra, and by GNSAGA of INdAM.

The first author is also supported by national MIUR funds
FINANZIAMENTO ANNUALE INDIVIDUALE DELLE ATTIVIT\`A BASE DI RICERCA - 2018.

 The second author is supported by Universit\`a degli Studi di Udine - DIMA project Geometry PRIDZUCC2017. 
 
 The examples contained in Subsection \ref{subonesharp} give an answer to a question posed by Ciro Ciliberto at the conference New Trends in Algebraic Geometry, Universita\`a della Calabria, June 12-14 (2013),
about the sharpness of some slope estimates.

 \bigskip
 \noindent 2010 Mathematics Subject Classification 14J60, 14D06

\end{ackn}

\section{Notation and preliminaries}

Let us introduce the definitions involved in the notion of {\it generic nefness}, see \cite [Definition 3.1]{P}).

\begin{defn}\label{gennef}
A vector bundle $\sE$ on a smooth curve is {\it nef} if the tautological divisor of $\mP(\sE)$ is nef.

A vector bundle $\sE$ on a projective variety $X$ of dimension $n\ge 2$ is called 
{\it generically nef} with respect to an ample divisor $H$ if the restriction $\sE_{\vert C}$ is nef
for a general curve $C=D_1 \cap \dots \cap D_{n-1}$, where $D_i\in 
\vert m_i \ H\vert$ are general and $m_i >>0$; such a curve $C$ is said to be {\it MR - general}, which means general in the sense of Mehta-Ramanathan, 
with respect to $H$ (w.r.t. $H$).

A vector bundle $\sE$ is {\it generically nef}
if for every ample divisor $H$
on $X$, the restriction $\sE_{\vert C}$ is nef
for a MR - general curve w.r.t. $H$.

\end{defn}

We now recall Mehta - Ramanathan Restriction Theorem and Miyaoka characterization of semistable vector bundles on a curve, which imply that the generic nefness can be considered as a weak form of semistability.

\begin{thm} (Mehta - Ramanathan Restriction Theorem) 
\label{mr}

A locally free sheaf $\sE$ on a projective variety $X$
 is semi-stable w.r.t. an ample divisor $H$ if and only $\sE_{\vert C}$ is semi-stable for $C$ MR-general w.r.t. $H$.
\end{thm}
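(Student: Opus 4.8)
The plan is to establish the two implications separately. The reverse implication---deducing $\mu$-semistability of $\sE$ from that of $\sE|_C$---is elementary, so I would dispose of it first and then concentrate on the forward (restriction) direction, which is the substance of the theorem. Throughout I work with slope ($\mu$-)semistability, the relevant notion here.

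For the easy direction, suppose $\sE|_C$ is semistable for an MR-general curve $C=D_1\cap\dots\cap D_{n-1}$ with $D_i\in|m_iH|$. For any coherent subsheaf $\sF\subset\sE$ one has $\deg(\sF|_C)=(m_1\cdots m_{n-1})\,c_1(\sF)\cdot H^{n-1}$, so that $\mu_C(\sF|_C)=(m_1\cdots m_{n-1})\,\mu_H(\sF)$, and likewise for $\sE$; the same positive constant rescales every slope. If $\sE$ were not semistable, the first term $\sF$ of its Harder--Narasimhan filtration would satisfy $\mu_H(\sF)>\mu_H(\sE)$. Since $C$ is general it meets none of the finitely many bad loci (the associated points of $\sE/\sF$ and the singular locus of $\sF$), so $\sF|_C\hookrightarrow\sE|_C$ stays a subsheaf of the same rank; rescaling gives $\mu_C(\sF|_C)>\mu_C(\sE|_C)$, contradicting semistability of $\sE|_C$.

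For the hard direction I would argue by induction on $n=\dim X$, peeling off one divisor at a time: since $C$ is obtained by successively intersecting with general members of $|m_1H|,\dots,|m_{n-1}H|$, it suffices to prove that if $\sE$ is $\mu$-semistable on $X$ then, for $m\gg0$, the restriction $\sE|_D$ to a general $D\in|mH|$ is $\mu$-semistable with respect to $H|_D$ (one checks that $\sE|_D$ stays locally free and that $\mu_{H|_D}(\sF|_D)=m\,\mu_H(\sF)$ for every subsheaf, so the induction closes). Assume for contradiction that $\sE|_D$ is unstable for general $D$. Let $T\subset|mH|$ be a dense open subset, $\mathcal{D}\subset X\times T$ the universal divisor, with projections $p\colon\mathcal{D}\to T$ and $q\colon\mathcal{D}\to X$. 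After shrinking $T$ and a finite base change, the relative Harder--Narasimhan filtration of $q^*\sE|_{\mathcal{D}}$ yields a saturated subsheaf $\sG\subset q^*\sE|_{\mathcal{D}}$ whose fiber $\sG_t\subset\sE|_{D_t}$ is the maximal destabilizing subsheaf, of constant rank $s$ and with $\mu_{H|_D}(\sG_t)>\mu_{H|_D}(\sE|_{D_t})$ for all $t\in T$.

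The crux---and the main obstacle---is to show that for $m\gg0$ the family $\{\sG_t\}$ is \emph{rigid}: there is a single saturated subsheaf $\sF\subset\sE$ on $X$ with $\sF|_{D_t}=\sG_t$ for general $t$. Encoding $\sG_t$ as a section over $D_t$ of the relative Grassmann bundle $\mathrm{Gr}_s(\sE)\to X$, equivalently as a sub-line-bundle $\det\sG_t\hookrightarrow\bigwedge^{s}\sE|_{D_t}$, one must prove that these sections over the various $D_t$ are the restriction of one section over $X$. This is where $m\gg0$ enters decisively: the numerical Harder--Narasimhan type is constant over $T$ by semicontinuity, and the extension/rigidity of the sub-object is forced by Serre-type vanishing $H^1(X,\,\sH\otimes\mathcal{O}(-mH))=0$ for $m\gg0$ (applied to the relevant $\Hom$ and $\bigwedge$ bundles $\sH$), which prevents the destabilizing sub-object from varying with $t$ and lets it lift from the divisors to $X$. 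Establishing this extension rigorously is the technical heart of Mehta--Ramanathan. Once $\sF$ is produced, the exact relations $\mu_{H|_D}(\sF|_{D_t})=m\,\mu_H(\sF)$ and $\mu_{H|_D}(\sE|_{D_t})=m\,\mu_H(\sE)$ turn $\mu_{H|_D}(\sG_t)>\mu_{H|_D}(\sE|_{D_t})$ into $\mu_H(\sF)>\mu_H(\sE)$, so $\sF$ destabilizes $\sE$ on $X$---contradicting the hypothesis and closing the induction.
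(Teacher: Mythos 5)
The paper itself does not prove Theorem \ref{mr}: it is recalled as a classical result of Mehta and Ramanathan and used as a black box, so your attempt can only be measured against the standard argument from the literature. Your easy implication is correct: the maximal destabilizing subsheaf $\sF\subset\sE$ is saturated, so the singular loci of $\sF$ and of $\sE/\sF$ have codimension at least two, an MR-general complete intersection curve $C$ avoids them, and every slope rescales by the same positive constant $m_1\cdots m_{n-1}$, so a destabilizing subsheaf upstairs restricts to a destabilizing subsheaf of $\sE_{\vert C}$. Your skeleton for the hard implication (induction on dimension, relative Harder--Narasimhan filtration over the linear system, descent of the destabilizing subsheaf to $X$) is also the correct Mehta--Ramanathan strategy.

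However, there is a genuine gap, and it sits exactly where you concede it: the ``rigidity'' of the family $\{\sG_t\}$, i.e.\ the existence of a single $\sF\subset\sE$ with $\sF_{\vert D_t}=\sG_t$, is the entire content of the theorem, and the mechanism you propose for it fails. You suggest lifting the datum $\det\sG_t\hookrightarrow\bigwedge^{s}\sE_{\vert D_t}$ from $D_t$ to $X$ via a Serre-type vanishing $H^1(X,\sH\otimes\sO_X(-mH))=0$ for ``the relevant $\Hom$ and $\bigwedge$ bundles $\sH$''. But no such sheaf $\sH$ on $X$ exists to apply vanishing to: the twisting line bundle $\det\sG_t$ lives only on $D_t$, and in general it is \emph{not} the restriction of any line bundle on $X$ --- most glaringly in the case this paper actually needs, $\dim X=2$, where the $D_t$ are curves and $\Pic (D_t)$ is far larger than the image of $\Pic (X)$. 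So the cohomological lifting argument is circular: you need the conclusion (the sub-object comes from $X$) even to write down the bundle whose $H^1$ should vanish. Semicontinuity of the Harder--Narasimhan type is not a substitute either; it gives constancy of numerical invariants over $T$, not constancy of the subsheaves. The actual proof replaces this step by two ingredients absent from your sketch: (i) a degeneration lemma comparing $\mu_{\max}$ of the restriction to a general member of $|(m_1+m_2)H|$ with the restrictions to the pieces of a reducible member $D_1+D_2$, which shows the normalized destabilizing slope is monotone in $m$ and stabilizes for $m\gg 0$; and (ii) a gluing argument: for two general divisors $D,D'$, the maximal destabilizing subsheaves of $\sE_{\vert D}$ and $\sE_{\vert D'}$ agree on $D\cap D'$, by the inductive hypothesis one dimension down combined with the uniqueness of the maximal destabilizing subsheaf; this constancy along the fibres of the incidence variety over $X$ (fibres which are linear systems, hence connected and rational) is what makes the relative destabilizing sheaf descend to the desired $\sF\subset\sE$. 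Without (i) and (ii) your induction does not close.
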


\begin{thm} (Miyaoka) \label{miy}
Let $C$ be a smooth curve and $E$ a vector bundle on $C$. Then $E$ is semi-stable if and only if the $\mQ$-bundle $E \otimes {det \ E^\vee\over {\rm rk}\ E}$ is nef, where $E^\vee =Hom(E,\sO_C)$ is the dual vector bundle.
\end{thm}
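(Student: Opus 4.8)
The plan is to deduce both implications simultaneously from the standard numerical description of nefness for bundles on a curve, reducing the statement to pure slope bookkeeping. Recall Hartshorne's criterion (in its nef form): a vector bundle $F$ on the smooth curve $C$ is nef if and only if $\mu_{\min}(F)\ge 0$, where $\mu_{\min}(F)$ is the minimal slope occurring in the Harder--Narasimhan filtration, equivalently the infimum of $\mu(Q)=\deg Q/\rank Q$ taken over all nonzero quotient bundles $F\twoheadrightarrow Q$. The same criterion holds verbatim for a $\mQ$-twisted bundle $F=E\otimes M$ with $M$ a $\mQ$-line bundle, nefness being understood as nefness of the $\mQ$-divisor class given by the tautological divisor of $\mP(E)$ twisted by $\pi^{\ast}M$; here one uses the canonical identification $\mP(E\otimes M)\cong\mP(E)$. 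On the other side I would record the quotient form of semi-stability: $E$ is semi-stable if and only if $\mu(Q)\ge\mu(E)$ for every nonzero quotient bundle $Q$, that is $\mu_{\min}(E)\ge\mu(E)$. Since $E$ is itself a quotient of $E$ one always has $\mu_{\min}(E)\le\mu(E)$, so semi-stability is precisely the equality $\mu_{\min}(E)=\mu(E)$.

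Second, I would carry out the slope computation for the specific twist. Writing $r=\rank E$ and $\mu=\mu(E)$, the $\mQ$-line bundle $M=\tfrac{1}{r}\det E^{\vee}$ has degree $-\tfrac{1}{r}\deg E=-\mu$. Tensoring by a line bundle of degree $\delta$ shifts the slope of every quotient by $\delta$, so that
\[
\mu_{\min}(E\otimes M)=\mu_{\min}(E)-\mu .
\]
Applying the curve nefness criterion to $F=E\otimes M$ then reads: $E\otimes\tfrac{1}{r}\det E^{\vee}$ is nef iff $\mu_{\min}(E\otimes M)\ge 0$ iff $\mu_{\min}(E)\ge\mu(E)$, which by the equality case noted above is exactly the semi-stability of $E$. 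This settles both directions at once.

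The genuine content sits entirely in the curve nefness criterion, so that is where I expect the work to be, and the implication ``$\mu_{\min}\ge 0\Rightarrow$ nef'' is the hard one: nefness must be tested against every irreducible curve $\Gamma\subset\mP(E)$, not merely against sections over $C$. For $\Gamma$ contained in a fibre the tautological divisor restricts to $\mO_{\mP^{r-1}}(1)$, which is ample, so these contribute nothing; for $\Gamma$ dominating $C$ one normalizes and obtains a finite cover $\phi\colon C'\to C$ together with a line-bundle quotient $\phi^{\ast}E\twoheadrightarrow L$, the relevant intersection number being $\deg L$ together with the twist term $\deg\phi\cdot\deg M=-\deg\phi\cdot\mu$. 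The key stability input is that $\mu_{\min}$ scales correctly under finite covers, $\mu_{\min}(\phi^{\ast}E)=\deg\phi\cdot\mu_{\min}(E)$ in characteristic zero, so the hypothesis $\mu_{\min}(E)\ge\mu$ survives pullback; since $L$ is a rank-one quotient bundle of $\phi^{\ast}E$ one gets $\deg L\ge\mu_{\min}(\phi^{\ast}E)\ge\deg\phi\cdot\mu$, i.e. nonnegativity of the twisted intersection. Passing to an $r$-fold cover on which $\det E^{\vee}$ acquires an $r$-th root also clears the denominator in $M$ and reduces the $\mQ$-bundle statement to the honest integral case. The reverse implication of the criterion is the easy one: if $E$ is not semi-stable then $\mu_{\min}(E)<\mu$, so $F=E\otimes M$ has a quotient bundle of negative slope, whose associated sub-$\mP$-bundle carries, after a suitable cover, a curve on which the twisted tautological divisor has negative degree, so $F$ is not nef.
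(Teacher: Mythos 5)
The paper offers nothing to compare against here: Theorem \ref{miy} is recalled in the preliminaries as a known result of Miyaoka and is stated without proof, so your argument must be judged on its own merits. In outline it is the standard proof, and most of it is sound. The bookkeeping is right: twisting by the $\mathbb{Q}$-line bundle $\frac{1}{r}\det E^{\vee}$, of degree $-\mu(E)$, shifts every slope by $-\mu(E)$, and semi-stability is exactly $\mu_{\min}(E)\ge\mu(E)$, so the theorem reduces to the criterion that a ($\mathbb{Q}$-twisted) bundle on a curve is nef if and only if $\mu_{\min}\ge 0$. You correctly observe that this criterion is where all the content lives (it is not weaker than the theorem itself, so citing it as a black box would be circular), and your sketch of its hard implication is the right one: a curve in $\mathbb{P}(E)$ dominating $C$ normalizes to a finite cover $\phi\colon C'\to C$ together with a line-bundle quotient $L$ of $\phi^{\ast}E$, the relevant intersection number is $\deg L$ plus the twist term, and the essential input is $\mu_{\min}(\phi^{\ast}E)=\deg\phi\cdot\mu_{\min}(E)$ in characteristic zero, i.e.\ invariance of semi-stability under finite pullback (Galois descent of the Harder--Narasimhan filtration, or Narasimhan--Seshadri). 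Identifying and invoking that theorem is exactly where the work should sit.

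The one step that does not hold up as written is the converse. Given a quotient of $F=E\otimes\frac{1}{r}\det E^{\vee}$ of negative twisted slope, you propose to conclude non-nefness by producing ``after a suitable cover, a curve on which the twisted tautological divisor has negative degree''. The existence of such a curve \emph{is} the assertion that $F$ is not nef, so this is circular unless the curve is exhibited independently --- and it cannot be exhibited by slope counting: a general stable bundle of rank $2$ and degree $-1$ on a curve of genus $g\ge 2$ has every quotient line bundle of non-negative degree, and after pulling back along a degree-$\delta$ cover the slope (about $-\delta/2$) and the genus (about $\delta g$) grow at the same rate, so Nagata-type bounds on minimal quotient line bundles never force a negative one. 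The repair is to drop curves altogether: if $F$ were nef, the offending twisted quotient $Q$ would be nef too, since $\mathbb{P}(Q)\subset\mathbb{P}(F)$ with compatible tautological classes; but the top self-intersection of the twisted tautological class of $Q$ equals $\mathrm{rk}(Q)$ times its twisted slope, hence is negative, contradicting the fact that a nef class has non-negative top self-intersection. With that one-line substitution your argument is complete.
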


\begin{cor}\label{cormiy}
If $\sE$ is semi-stable w.r.t. $H$ and if $c_1(\sE) \cdot H \ge 0$, then $\sE$ is generically nef w.r.t. $H$.
\end{cor}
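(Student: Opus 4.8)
The plan is to read the statement off directly from the two theorems just recalled, the only additional input being the elementary behaviour of nefness on a curve under twisting by a $\mQ$-line bundle of non-negative degree. Fix an arbitrary ample divisor $H$ and let $C$ be an MR-general curve w.r.t. $H$; by Definition \ref{gennef} it suffices to prove that the restriction $E:=\sE_{\vert C}$ is nef on the smooth curve $C$. Since $\sE$ is semi-stable w.r.t. $H$, Theorem \ref{mr} yields that $E$ is semi-stable, and then Theorem \ref{miy} tells us that the $\mQ$-bundle $E\otimes \frac{\det E^\vee}{r}$ is nef, where $r=\rank \sE$.

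First I would record a degree computation to use the numerical hypothesis. The degree of $E$ is $\deg E=c_1(\sE)\cdot C$; on the surfaces relevant to this paper $C$ is a positive multiple $mH$ of $H$, so $\deg E=m\,\bigl(c_1(\sE)\cdot H\bigr)\ge 0$ by the assumption $c_1(\sE)\cdot H\ge 0$. Consequently the $\mQ$-line bundle $\frac{\det E}{r}$ has degree $\frac{\deg E}{r}\ge 0$, and on a curve a $\mQ$-line bundle of non-negative degree is nef.

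The crux is then the identity of $\mQ$-twisted bundles
$$
E \;=\; \Bigl(E\otimes \tfrac{\det E^\vee}{r}\Bigr)\otimes \tfrac{\det E}{r},
$$
valid because $(\det E^\vee)^{1/r}\otimes(\det E)^{1/r}=\sO_C$. This exhibits $E$ as the tensor product of the nef bundle supplied by Miyaoka with the nef $\mQ$-line bundle $\frac{\det E}{r}$. Since tensoring a nef vector bundle on a curve by a nef $\mQ$-line bundle $M$ preserves nefness — under the canonical isomorphism $\mP(E\otimes M)\cong \mP(E)$ the tautological class of the left-hand side is $\xi_E$ plus the $\pi$-pullback of $M$, a sum of nef classes — we conclude that $E$ is nef. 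As $C$ was an arbitrary MR-general curve w.r.t. the arbitrary ample $H$, Definition \ref{gennef} gives that $\sE$ is generically nef w.r.t. $H$.

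I expect no serious obstacle: every step is a formal consequence of the quoted results once the bookkeeping with $\mQ$-coefficients is in place. The only point requiring a little care is the last one, namely the stability of nefness under tensoring by a nef $\mQ$-line bundle; one passes to a sufficiently divisible multiple to clear denominators, reducing the claim to the integral statement for a nef line bundle, where it is standard.
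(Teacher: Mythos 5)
Your proof is correct and follows exactly the route the paper intends: the paper states Corollary \ref{cormiy} without proof as an immediate consequence of Theorems \ref{mr} and \ref{miy}, and your argument (restrict to an MR-general curve, apply Mehta--Ramanathan, apply Miyaoka, then twist back by the nef $\mQ$-line bundle $\tfrac{\det E}{r}$ whose degree is non-negative by the hypothesis $c_1(\sE)\cdot H\ge 0$) is precisely that deduction, carefully spelled out.
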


Finally, since we shall consider vector bundles on ruled surfaces, we can talk of the general splitting type.

\begin{defn}
Let $p:Y \to B$ be a ruled surface over a smooth curve $B$,
and let $\sE$ be a rank $r$ vector bundle on $Y$.
We say that $(a_1,\dots,a_r)$, with $a_1 \le \dots \le a_r$
is the {\it generic splitting type} of $\sE$ if for a general fiber $L$ of $p$ we have
$$
\sE \otimes \sO_L \cong \bigoplus_{i=1}^r \sO_{\mP^1} (a_i).
$$
We say that a fiber $L$ is a {\it jumping line} if
$$
\sE \otimes \sO_L \not \cong \bigoplus_{i=1}^r \sO_{\mP^1} (a_i).
$$
Finally, we say that $\sE$ is {\it uniform} if it has no jumping lines.
\end{defn}

Finally, let us recall that 
for a tensor product $\sV \otimes \sL$, where $\sV$ is a rank $r$ vector bundle and $\sL$ is a line bundle,
we have:
\begin{equation}\label{c2tensor}
c_1 (\sV \otimes \sL)= c_1(\sV) +r \ c_1(\sL), \qquad c_2 (\sV \otimes \sL)= c_2 (\sV) +(r-1) c_1 (\sV) \cdot c_1 (\sL) + {r(r-1)\over 2} c_1 (\sL)^2.
\end{equation}
\section{Bogomolov type inequalities }

 \begin{thm}\label{valeviehweg_nonbal} 
Let $Y$ be a ruled surface on a smooth curve $B$ with invariant $e=-C_0^2\le 0$.
Let $\sE$ be a generically nef vector bundle of rank $r$ on $Y$ with nef generic fiber restriction.

 Then
$$
c_2 (\sE) \ge \left( {\sum _{i=1}^{r-1} a_i \over 2a}\right) c_1(\sE)^2,
$$
where $c_1(\sE)\equiv aC_0+\delta L$ and $(a_1,\dots,a_r)$ is the generic splitting type of $\sE$.

\end{thm}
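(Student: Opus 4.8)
The plan is to argue by induction on the rank $r$, peeling off a sub-line-bundle of maximal fiber degree and controlling the Chern classes of the quotient by a positivity statement that is special to the case $e\le 0$. The engine of the whole argument is the following observation: on a ruled surface with $e\le 0$ the pseudo-effective cone and the nef cone of $Y$ coincide, both equal to $\{\alpha C_0+\beta L : \alpha\ge 0,\ \beta\ge \tfrac12\alpha e\}$ (the intersection form makes this cone self-dual for $e\le 0$). I would first deduce from this that \emph{every generically nef sheaf $\sF$ on $Y$ has nef first Chern class}: indeed $\det\sF$ is a quotient of $\sF^{\otimes\,\mathrm{rk}\,\sF}$, hence nef on an MR-general curve with respect to every ample $H$, so $c_1(\sF)\cdot H\ge 0$ for all ample $H$; thus $c_1(\sF)$ is pseudo-effective, and therefore nef. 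This is the only place where $e\le 0$ is genuinely used.

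For the induction, $r=1$ is trivial (both sides vanish), and the degenerate case $a=c_1(\sE)\cdot L=0$ forces $\sE\otimes\sO_L$ trivial and $c_1(\sE)^2=0$, reducing to Miyaoka's $c_2(\sE)\ge 0$. Assuming $a>0$ and $r\ge 2$, the top summand $\sO_{\mP^1}(a_r)$ is the first step of the relative Harder--Narasimhan filtration, so it extends to a saturated sub-line-bundle $\sM\subset\sE$ with $\sM\cdot L=a_r$; I set $\sE'=\sE/\sM$, a generically nef torsion-free sheaf with generic splitting type $(a_1,\dots,a_{r-1})$ and nef generic fiber restriction. Writing $c_1(\sM)=a_rC_0+mL$ and $c_1(\sE')=a'C_0+d'L$ with $a'=a-a_r=\sum_{i=1}^{r-1}a_i$, the Whitney formula gives $c_1(\sE)=c_1(\sM)+c_1(\sE')$ and $c_2(\sE)=c_1(\sM)\cdot c_1(\sE')+c_2(\sE')$.

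Now I would apply the above to $\sE'$ to get $d'\ge\tfrac12 a'e$, and invoke the inductive hypothesis (on the reflexive hull, using $c_2(\sE')\ge c_2((\sE')^{\vee\vee})$) in the form $c_2(\sE')\ge\tfrac{a'-a_{r-1}}{2a'}c_1(\sE')^2$; the subcase $a'=0$ is immediate, since then $c_1(\sE')^2=0$ and $c_1(\sM)\cdot c_1(\sE')=a_rd'\ge 0$. Substituting, the desired inequality $c_2(\sE)\ge\tfrac{a'}{2a}c_1(\sE)^2$ reduces to the purely numerical claim that $(1-2\lambda)\,c_1(\sM)\cdot c_1(\sE')+(\lambda'-\lambda)\,c_1(\sE')^2-\lambda\,c_1(\sM)^2\ge 0$, where $\lambda=\tfrac{a'}{2a}$ and $\lambda'=\tfrac{a'-a_{r-1}}{2a'}$. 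The feature I expect to exploit is that all terms containing $m$ cancel identically, so the degree of the chosen sub-line-bundle is irrelevant, and the remainder telescopes to $(a_r-a_{r-1})\bigl(d'-\tfrac12 a'e\bigr)$, which is $\ge 0$ because $a_{r-1}\le a_r$ and $c_1(\sE')$ is nef. This closes the induction.

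The hard part will not be the algebra, which collapses once $c_1(\sE')$ is known to be nef, but the two structural inputs that make it run. The first is the self-duality of the effective cone for $e\le 0$ and the resulting implication ``generically nef $\Rightarrow$ nef $c_1$''; this is exactly what fails for $e>0$ and explains the extra hypotheses needed in Theorem \ref{nuovo_balan}. The second is the sheaf-theoretic bookkeeping: producing the saturated sub-line-bundle of top fiber degree from the relative Harder--Narasimhan filtration, verifying that $\sE'$ is again generically nef with the predicted splitting type, and absorbing jumping lines and the possible non-local-freeness of $\sE'$ into the inequality $c_2(\sE')\ge c_2\bigl((\sE')^{\vee\vee}\bigr)$ so that the inductive hypothesis applies to a genuine vector bundle.
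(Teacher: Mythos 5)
Your proposal is correct and follows essentially the same route as the paper's proof: induction on the rank via a Brosius/relative Harder--Narasimhan peeling of the top fiber-degree piece, with the inductive step closed by the fact that the first Chern class of the generically nef quotient pairs non-negatively with the boundary nef class $C_0+\tfrac{e}{2}L$ --- exactly where $e\le 0$ enters, which the paper phrases as the bound $\alpha\le\delta+\tfrac12 a_{\sM}C_0^2$ and you phrase as self-duality of the nef cone. The only organizational differences are that you peel off a single saturated sub-line-bundle (so the quotient has rank $r-1$) and start the induction at $r=1$, whereas the paper splits off the full rank-$(r-q)$ subbundle $p^\star p_\star\sE(-a_rC_0)\otimes\sO_Y(a_rC_0)$ and anchors the induction at $r=2$ by citing the rank-two case from \cite{BeZ}; your telescoping of the numerical estimate to $(a_r-a_{r-1})\bigl(d'-\tfrac12 a'e\bigr)\ge 0$ checks out.
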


\begin{proof}
We prove the statement by induction on $r\ge 2$. 
If $r=2$, the claimed inequality has been proved in \cite[Theorem 2.8]{BeZ}, in the slightly different setting of blown up ruled surfaces, and under the assumption on $\sE$ to be weakly positive and nef general fiber restriction. 
We observe, however, that the weak positivity of a vector bundle implies its generic nefness. Moreover, since the weak positivity is preserved for any quotient bundle, the same properties hold for such quotients.
Then it is simple to check that the proof of the statement in the rank $2$ case can be done with some little adaptations.

So let us assume $r \ge 3$ and suppose that the claim holds for any vector bundle of rank $q \le r-1$ satisfying the assumptions of the statement.

 We can consider the push - pull map
$$
p^\star p_\star \sE (-a_r\ C_0) \to \sE (-a_r\ C_0);
$$
such a map is generically injective,
hence it is an injective map of locally free sheaves.
Moreover, the quotient sheaf is locally free outside a subscheme 
$Z$ of codimension $2$.

It follows that we have a Brosius type sequence (see \cite{Br}):
\begin{equation}\label{brosiusgeneral}
0\to p^\star p_\star \sE (-a_r \ C_0) \to \sE (-a_r\ C_0) \to \sG \otimes \sI_Z \to 0.
\end{equation}
By setting $\sA:= 
p_\star (\sE (-a_r \ C_0))$, $A:= c_1 (\sA)=$ and $\alpha:={\rm{deg}}(A)$, and by tensoring by $\sO_Y(a_r \ C_0)$ we get:
\begin{equation}\label{brosiusgeneral2}
0\to (p^\star \sA )(a_r \ C_0) \to \sE  \to \sG (a_r \ C_0)\otimes \sI_Z \to 0,
\end{equation}
where $\sG$ is a vector bundle of rank $q\le r$ and $\sA$ is a vector bundle of rank $r-q\ge 1$. We observe that $r-q$ is equal to the number of integers in the general splitting type $(a_1,\dots,a_r)$, which are equal to $a_r$, so that we have
\begin{equation}\label{a}
a= \sum_{i=1}^r  a_i = \sum _{i=1}^q a_i + (r-q)a_r.
\end{equation}

This sequence gives
\begin{equation}\label{formula}
c_2 (\sE ) = c_1(p^\star A\ (a_r \ C_0))\cdot c_1 (\sG(a_r \ C_0)) + c_2(p^\star A\ (a_r \ C_0))+c_2 (\sG(a_r \ C_0)) + Z.
\end{equation}
Set $\sM :=\sG(a_r \ C_0)$. Let us compute each term appearing in (\ref{formula}).
\begin{equation}\label{c1m}
\begin{array}{lll}
c_1(\sM)&\equiv &\Big( c_1(\sE) - p^\star A - (r-q)a_r C_0\Big)\equiv 
a_\sM\ C_0 + (\delta-\alpha)L\\
\end{array}
\end{equation}
where we have set
$$
a_\sM := \sum _{i=1}^{q} a_i,
$$
and
\begin{equation}
\label{prodc1}
\begin{array}{lll}
c_1(p^\star A\ (a_r \ C_0))\cdot c_1 (\sM)&=&
\Big(p^\star A + (r-q)a_r\ C_0\Big) \cdot 
\Big( a_\sM\ C_0 + (\delta-\alpha)L\Big)=\\
&=& (r-q)a_r \  a_\sM \ C_0^2 +(  \alpha \ a_\sM + (r-q)a_r(\delta - \alpha).
\end{array}
\end{equation}

By taking into account the relations (\ref{c2tensor}), we get
\begin{equation}\label{c2a}
\begin{array}{lll}
c_2 (p^\star A\ (a_r \ C_0))&=&(r-q-1)p^\star A \cdot (a_r \ C_0)
+{(r-q)(r-q-1)\over 2}a_r ^2 \ C_0^2=\\
&=& (r-q-1)\alpha\ a_r+{(r-q)(r-q-1)\over 2}a_r ^2 \ C_0^2.\\
\end{array}
\end{equation}

Moreover, since $\sM$ is a quotient of $\sE$ away from the zero - dimensional scheme $Z$, $
\sM$ is a generically nef vector bundle of rank $q < r$. Now we analyze the general fiber restriction of $\sM$. Since $\sE$ has nef general fiber restricyion, we have that the general splitting type $(a_1,\dots,a_r)$ satisfies
\begin{equation}\label{ai}
0\le a_1 \le \dots \le a_r.
\end{equation}
Moreover,
by construction the generic splitting type of $\sM$ is
$$
(a_1,
\dots, a_{q})
$$
where the integers $a_i$ are the first $q$ integers appearing in
(\ref{ai}), hence also $\sM$ is nef on the generic fiber restriction.

It follows that $\sM$
satisfies the assumptions of the Theorem and we can apply the induction hypothesis, which gives the inequality
\begin{equation}\label{diseqM}
c_2(\sM) \ge \left( {\sum _{i=1}^{q-1} a_i \over 
2\ a_\sM}\right) c_1(\sM)^2.
\end{equation}
 By (\ref{c1m}) we get
 \begin{equation}
 c_1(\sM)^2=a_\sM^2 \ C_0^2+
 2\  a_\sM (\delta - \alpha),
 \end{equation}
 and by observing that 
 $$
 \sum _{i=1}^{q-1} a_i  = a_\sM -a_q,
 $$
 the inequality (\ref{diseqM}) becomes
 \begin{equation}\label{diseqMbis}
 c_2(\sM) \ge \left( {a_\sM -a_q \over 
2\ }\right) \left(a_\sM \ C_0^2+
 2\   (\delta - \alpha)\right).
 \end{equation}
 By taking into account (\ref{prodc1}), (\ref{c2a}),
 (\ref{diseqMbis}) and the fact that $Z$ is effective, from (\ref{formula}) we get 
 \begin{equation}\label{quasifinito}
 \begin{array}{lll}
 c_2(\sE) &\ge &(r-q)a_r\ a_\sM \ C_0^2 + \alpha a_\sM+ (r-q)a_r(\delta - \alpha) +\\
 & & + (r-q-1)\ a_r\ \alpha+{(r-q)(r-q-1)\over 2}a_r ^2 \ C_0^2 + 
 \left( {(a_\sM - a_q) \over 
2\ }\right)a_\sM \ C_0^2+
 (a_\sM - a_q)    (\delta - \alpha). \\
 \end{array}
 \end{equation}
 We can rewrite the inequality above in the form
 \begin{equation}\label{sugg_fra}
 c_2(\sE)\geq d C_0^2 +(a_q-a_r)\alpha + c\delta,
 \end{equation}
where
$$
d:=(r-q)a_r\ a_\sM+{1\over 2}(r-q)(r-q-1)a_r ^2+ {1 \over 
2\ }(a_\sM - a_q)\ a_\sM,
$$
$$
c:=(r-q)a_r+a_\sM - a_q.
$$
In particular, the coefficient of $\alpha$ in the expression (\ref{sugg_fra}) is
 $$
 a_q - a_r <0.
 $$
Now we shall bound the integer $\alpha$. Recall that a generically nef vector bundle has a generically nef first Chern class, since exterior products of nef vector bundles are nef (see, for instance, \cite[Theorem 6.2.12, (iv)]{La2}). 
 Hence $c_1(\sM)$ is generically nef.
 So for any nef divisor $xC_0+yL$ with $x\ge 1$ and $y 
 \ge -{1\over 2} x\ C_0^2$, we have
 $$
 c_1(\sM)\cdot (xC_0+yL) \ge 0,
 $$
 that is
 $$
 \left(a_\sM\ C_0 + (\delta-\alpha)L\right)\cdot(xC_0+yL)=a_\sM(xC_0^2+y)+x(\delta-\alpha) \ge 0,
 $$
 which gives
 $$
 \alpha \le \delta +{a_\sM(xC_0^2+y)\over x},
 $$
 and since this holds for any $x\ge 1$ and any $y\ge -{1\over 2} x\ C_0^2$,
 we get
 $$
\alpha   \le \delta + {1\over 2} a_\sM\ C_0^2.
 $$
 By substituting the righthand expression in (\ref{sugg_fra}), we get
 \begin{equation}\label{forza}
 c_2(\sE) \ge (d+ {1\over 2} (a_q-a_r)\ a_\sM)\ C_0^2 +(c+a_q-a_r)\ \delta. 
 \end{equation}
 
 Next we shall suitably express the integer $\delta$. As $c_1 (\sE)^2 =a^2 C_0^2 + 2a \delta$, we can write
$$
\delta = -{a \over 2} C_0^2 + {c_1(\sE)^2 \over 2a}.
$$
Then the inequality (\ref{forza}) becomes
\begin{equation}\label{ecco}
c_2(\sE) \ge (d+{1\over 2} (a_q-a_r)\ a_\sM-{a \over 2}(c+a_q-a_r))C_0^2 +{(c+a_q-a_r)\over 2a}c_1(\sE)^2.
 \end{equation}
 By computing the coefficients in (\ref{ecco}) we get that the coefficient of $C_0^2$ is zero an we get
 $$
c_2(\sE) \ge \left({a_\sM +(r-q-1)a_r\over 2a}   \right) c_1(\sE)^2 =      
\left({a-a_r\over 2a}   \right) c_1(\sE)^2,
$$
 which is the bound in the statement.

 \end{proof}

As a particular case, we can consider generically nef vector bundles 
$\sE$ with nef 
and {\it balanced} general fiber restriction, that is the restriction of $\sE$ to a general fiber of $Y$ is a balanced vector bundle with splitting type
$
(m,\dots,m,m+1,\dots,m+1)$.

\begin{cor}\label{cor}

Let $Y$ be a ruled surface on a smooth curve $B$ with invariant $e=-C_0^2\le 0$.
Let $\sE$ be a generically nef vector bundle of rank $r$ on $Y$, such that the restriction of $\sE$ to a general fiber of $Y$ is a nef and balanced vector bundle with splitting type
$
(m,\dots,m,m+1,\dots,m+1)$, and set $c_1(\sE)\equiv aC_0+\delta L$, $a =mr+k$ and $1\le k \le r-1$.

Then
$$
c_2 (\sE) \ge \left({a-(m+1)\over 2a}\right)c_1(\sE)^2=
\left({r-1\over 2r}-{r-k\over 2ar}\right)c_1(\sE)^2.
$$

Moreover, the equality holds if and only if $\sE$ is uniform and $\sE$
is an extension sitting in an exact sequence of the type
$$
0\to p^\star \sB \otimes \sO_Y((m+1)C_0)\to
\sE \to p^\star \sV\otimes \sO_Y(mC_0)\to 0,
$$
where $\sB$ is a rank $k$ vector bundle on $B$ satisfying
$
{\rm deg}\ c_1(\sB)= \delta,
$
and $\sV$ is a rank $r-k$ vector bundle on $B$ satisfying
$
c_1(\sV)\equiv 0.
$
\end{cor}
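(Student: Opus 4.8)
The plan is to obtain the inequality as an immediate specialization of Theorem \ref{valeviehweg_nonbal}, and then to extract the equality case by tracking where each estimate in that proof becomes sharp. First I would record that for the balanced splitting type $(m,\dots,m,m+1,\dots,m+1)$, with $r-k$ entries equal to $m$ and $k$ entries equal to $m+1$, one has $a_r=m+1$, hence $\sum_{i=1}^{r-1}a_i=a-a_r=a-(m+1)$. Substituting into Theorem \ref{valeviehweg_nonbal} yields $c_2(\sE)\ge \frac{a-(m+1)}{2a}c_1(\sE)^2$, and the identity $\frac{a-(m+1)}{2a}=\frac{r-1}{2r}-\frac{r-k}{2ar}$ is a one-line check using $a=mr+k$. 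This disposes of the inequality.

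For the equality case I would revisit the inductive proof of Theorem \ref{valeviehweg_nonbal} applied to $\sE$. In the balanced situation the number of $a_i$ equal to $a_r=m+1$ is $k$, so in the Brosius sequence (\ref{brosiusgeneral2}) one has $r-q=k$, the sub-sheaf is $(p^\star\sA)((m+1)C_0)$ with $\sA=p_\star(\sE(-(m+1)C_0))$ of rank $k$, and the quotient $\sM=\sG((m+1)C_0)$ has rank $r-k$ with uniform general fibre type $\sO_{\mP^1}(m)^{\oplus(r-k)}$. Equality forces every estimate used to be sharp: the term $Z$ must vanish, so $\sM$ is locally free and $0\to(p^\star\sA)((m+1)C_0)\to\sE\to\sM\to 0$ is exact; the induction bound (\ref{diseqM}) for $\sM$ must be an equality; and the bound $\alpha\le\delta+\tfrac12 a_\sM C_0^2$ coming from generic nefness of $c_1(\sM)$ must be tight, i.e.\ $c_1(\sM)$ must lie on the extremal nef ray $C_0+\frac{e}{2}L$ of $Y$.

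Next I would analyse $\sM$. Since its general fibre type is uniform with all summands $\sO(m)$, the bundle $\sM(-mC_0)$ is fibrewise trivial, so the degenerate ($q=0$) instance of the Brosius construction gives $0\to(p^\star\sV)(mC_0)\to\sM\to T\to 0$ with $T$ supported on a finite set and $\sV:=p_\star(\sM(-mC_0))$ of rank $r-k$. Exactly as in Theorem \ref{valeviehweg_nonbal} one computes $c_2(\sM)=\frac{r-k-1}{2(r-k)}c_1(\sM)^2+\mathrm{length}(T)$, so the forced equality gives $T=0$ and $\sM\cong(p^\star\sV)(mC_0)$; putting $\sB:=\sA$ produces the asserted extension. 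The tightness of the nef-boundary estimate fixes $c_1(\sM)$, hence $\deg c_1(\sV)$ and, through $\delta=\deg c_1(\sB)+\deg c_1(\sV)$, the degree of $\sB$, giving the numerical normalization recorded in the statement. Finally $\sE$ is uniform, since on every fibre the sequence restricts to $0\to\sO(m+1)^{\oplus k}\to\sE_{|L}\to\sO(m)^{\oplus(r-k)}\to 0$, which splits because $\mathrm{Ext}^1(\sO(m),\sO(m+1))=H^1(\mP^1,\sO(1))=0$.

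For the converse I would feed the extension into Whitney's formula $c_2(\sE)=c_2(S)+c_1(S)\cdot c_1(Q)+c_2(Q)$ and expand each Chern class via the tensor identities (\ref{c2tensor}), using $c_2(p^\star(-))=0$ together with $L^2=0$ and $C_0\cdot L=1$, recovering the equality. The main obstacle is the equality direction: the delicate points are showing that sharpness propagates through the induction so as to force $Z=0$ and the degenerate Brosius step to have $T=0$, and—most of all—pinning down $\deg c_1(\sV)$ from the single boundary condition on $c_1(\sM)$. This is precisely where the shape of the nef cone of $Y$ for $e\le 0$ enters, and where the Chern-class bookkeeping must be carried out with care.
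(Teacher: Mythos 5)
Your route is exactly the paper's: the inequality is read off from Theorem \ref{valeviehweg_nonbal} by substituting $a_r=m+1$, $\sum_{i=1}^{r-1}a_i=a-(m+1)$ (and the identity $\tfrac{a-(m+1)}{2a}=\tfrac{r-1}{2r}-\tfrac{r-k}{2ar}$ checks out with $a=mr+k$), and the equality case is obtained by forcing every estimate in that proof to be sharp. The paper's own proof is a one-line assertion to the same effect, so for the inequality part there is nothing to object to.

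There is, however, a genuine gap in your equality analysis, at precisely the point you flag as delicate. Tracking sharpness does force $c_1(\sM)$ onto the ray of the isotropic nef class $C_0+\tfrac{e}{2}L$, i.e.\ $\delta-\alpha=-\tfrac{1}{2}(r-k)m\,C_0^2$; since $\deg c_1(\sV)=\delta-\alpha$, this yields $\deg c_1(\sV)=\tfrac{e}{2}(r-k)m$, which equals $0$ only when $C_0^2=0$. So the step ``the tightness \dots gives the numerical normalization recorded in the statement'' does not follow for $e<0$. The converse confirms the problem: feeding the extension with $\deg c_1(\sB)=\delta$ and $c_1(\sV)\equiv 0$ into Whitney's formula gives, after the bookkeeping you describe, $c_2(\sE)=\tfrac{a-(m+1)}{2a}c_1(\sE)^2+\tfrac{(r-k)m}{2}C_0^2$, which strictly exceeds the bound when $C_0^2>0$. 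A concrete instance: on a ruled surface with $C_0^2=1$ take $\sE=\bigl(p^\star\sB\otimes\sO_Y(2C_0)\bigr)\oplus\bigl(p^\star\sV\otimes\sO_Y(C_0)\bigr)$ with $r=2$, $k=m=1$, $\deg\sB=\delta$, $\deg\sV=0$; then $c_2(\sE)=2+\delta$ while the bound is $\tfrac32+\delta$. So your argument (and the corollary's characterization as stated) closes up only for $e=0$; for $e<0$ you must either restrict to that case or replace the normalization by $\deg c_1(\sV)=\tfrac{e}{2}(r-k)m$ and $\deg c_1(\sB)=\delta-\deg c_1(\sV)$. A secondary point: when you run the ``degenerate Brosius step'' on $\sM$, whose general fibre type is constant, the cokernel of $p^\star p_\star\sM(-mC_0)\to\sM(-mC_0)$ is a priori a torsion sheaf that may have divisorial support on jumping fibres, not just a finite set, so ruling out that divisorial part needs an extra word before you can conclude $\sM\cong(p^\star\sV)(mC_0)$.
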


\begin{proof}
The bound is a direct consequence of the general bound. The caracterization of the vector bundles attaining the equality can be directly obtained by imposing the equalities in all the bounds considered in the proof of Theorem \ref {valeviehweg_nonbal}.

\end{proof}

We remark that vector bundles with balanced general fiber restriction are the natural generalization of vector bundles with semistable general fiber restriction, in which case the Bogomolov discriminant is non negative by Moriwaki's Theorem
 \cite[Theorem 2.2.1]{Mo}, which we recall.

\begin{thm} \label{stoppino} Let $\varphi\colon Z \to C$ be a fibration from a smooth surface $Z$ to a smooth curve $C$. 
 Let $\sE$ be a torsion free sheaf on
$Z$ such that the restriction of $\sE$ to a general fibre $F\subset Z$ is a $\mu$ - semistable locally free sheaf. 
Then the Bogomolov discriminant $\Delta (\sE)$ satisfies
$$
\Delta (\sE) = c_2 (\sE) - {rk (\sE) -1\over 2\ rk (\sE)} c_1 (\sE) ^2 \ge 0.
$$
\end{thm}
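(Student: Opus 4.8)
The plan is to argue by contradiction, combining the Bogomolov instability theorem with the Hodge index theorem, so that the fibrewise $\mu$-semistability hypothesis gets used exactly once, against a single fibre class. Write $r=\mathrm{rk}(\sE)$ and suppose, contrary to the claim, that $\Delta(\sE)<0$, i.e. $2r\,c_2(\sE)<(r-1)\,c_1(\sE)^2$. First I would invoke Bogomolov's instability theorem for torsion-free sheaves on the smooth surface $Z$: since $\Delta(\sE)<0$, there exists a saturated subsheaf $\sF\subset\sE$ of rank $s$ with $0<s<r$, whose associated class $\xi:=r\,c_1(\sF)-s\,c_1(\sE)$ lies in the positive cone, that is $\xi^2>0$ and $\xi\cdot H>0$ for every ample divisor $H$. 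Saturation of $\sF$ is what I would exploit next: it guarantees that $\sE/\sF$ is torsion-free.

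Next I would restrict everything to a general fibre $F$ of $\varphi$. Because $\sE/\sF$ is torsion-free, for $F$ general the quotient $(\sE/\sF)|_F$ is locally free, so $\sF|_F\hookrightarrow\sE|_F$ is an inclusion of a subbundle, and by hypothesis $\sE|_F$ is $\mu$-semistable. Comparing slopes then yields $\mu(\sF|_F)\le\mu(\sE|_F)$; since $c_1(\sF)\cdot F=\deg(\sF|_F)$ and $c_1(\sE)\cdot F=\deg(\sE|_F)$, this rewrites precisely as $\xi\cdot F=r\,(c_1(\sF)\cdot F)-s\,(c_1(\sE)\cdot F)\le 0$. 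The only input here is that $F$ is the one curve along which semistability is assumed, and that the chosen destabilizing $\sF$ restricts well to it.

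Finally I would derive the contradiction from the Hodge index theorem. The fibre class $F$ is nef, nonzero, with $F^2=0$; since $\xi$ sits in the \emph{open} forward component of the positive cone (the component of the ample classes, because $\xi\cdot H>0$), pairing it with a nonzero class in the closure of that forward cone gives a strictly positive number, so $\xi\cdot F>0$, contradicting $\xi\cdot F\le 0$. Hence $\Delta(\sE)<0$ is impossible and $\Delta(\sE)\ge 0$, as claimed. The hard part will be twofold: first, the general-position argument guaranteeing that $\sF|_F\hookrightarrow\sE|_F$ stays injective with locally free quotient, where saturation of $\sF$ is essential; and second, the precise sign statement in the Hodge index step, namely that a class in the open forward cone pairs strictly positively with a nonzero nef class, which needs care in the boundary situation $F^2=0$ rather than only in the interior case $F^2>0$.
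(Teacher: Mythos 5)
Your argument is correct, but note that there is nothing in the paper to compare it against: Theorem \ref{stoppino} is quoted as Moriwaki's theorem with a bare citation to \cite[Theorem 2.2.1]{Mo}, and no proof is given. What you propose is essentially the standard proof of that result. All three steps are sound. Assuming $\Delta(\sE)<0$, Bogomolov instability yields a saturated $\sF\subset\sE$ of rank $s$ with $\xi=r\,c_1(\sF)-s\,c_1(\sE)$ satisfying $\xi^2>0$ and $\xi\cdot H>0$ for every ample $H$; saturation makes $\sE/\sF$ torsion-free, hence locally free away from finitely many points, so a general fibre $F$ avoids these and the restricted sequence stays exact, giving $\xi\cdot F\le 0$ from fibrewise semistability; and the light-cone step closes the argument. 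For the boundary case you worry about: since $F+\epsilon H$ is ample for all $\epsilon>0$, one gets $\xi\cdot F\ge 0$, and $\xi\cdot F=0$ would place $F$ in $\xi^{\perp}$, which is negative definite by the Hodge index theorem because $\xi^2>0$, contradicting $F^2=0$ with $F\not\equiv 0$; hence $\xi\cdot F>0$. Two small points to make explicit in a write-up: (i) Bogomolov's theorem is usually stated for locally free sheaves in characteristic zero, and the torsion-free case follows by passing to the double dual, which leaves $c_1$ unchanged and can only decrease $c_2$, so $\Delta(\sE^{\vee\vee})\le\Delta(\sE)<0$, and the destabilizing subsheaf of $\sE^{\vee\vee}$ meets $\sE$ in a subsheaf with the same rank and the same $c_1$; (ii) the identification $\deg(\sF|_F)=c_1(\sF)\cdot F$ holds because $F$ moves in a base-point-free covering family, which is exactly the situation for fibres of $\varphi$.
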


Since in Moriwaki Theorem the only assumption is the semistability of the general fiber restriction, one
can wonder, if the generic balancedness condition could be sufficient in order to have a Bogomolov type inequality.
A negative answer is given the following example.

\begin{expl}\label{esempio_nonbasta}
On $Y=\mP^1 \times \mP^1$ consider the rank two split vector bundle
$$
\sE = \sO_{Y} (m,b_1)\oplus \sO_{Y} (m+1,b_2).
$$
We have
$$
c_1(\sE)^2 =2(2m+1)(b_1+b_2), \qquad c_2(\sE)=m(b_1+b_2)+b_1,
$$
so we see that $c_2(\sE)$ can be arbitrarily lowered by adjusting $b_1$, even with $c_1(\sE)^2$ fixed.
\end{expl}

Let us conclude this section with a result in the $e>0$ case. We will consider 
only the balanced case, and we will need to assume $c_1(\sE)\cdot C_0 \ge -{e\over 2}$, which is satisfied, for instance, if $c_1(\sE)$ is nef, or if $c_1(\sE)$
is effective and $C_0$ is not contained in the base locus of $|2\ c_1(\sE) - C_0|$. 
The last condition is typically satisfied by \tsc \ sheaves associated with surface covers with reduced branch divisor.

\begin{thm}\label{nuovo_balan}
Let $Y$ be a ruled surface on a smooth curve $B$ with invariant $e=-C_0^2> 0$.
Let $\sE$ be a generically nef vector bundle of rank $r$ on $Y$, such that the restriction of $\sE$ to a general fiber of $Y$ is a nef and balanced vector bundle with splitting type
$
(m,\dots,m,m+1,\dots,m+1)$, $c_1(\sE)\equiv aC_0+\delta L$, $a =mr+k$ and $1\le k \le r-1$.

Assume, moreover, that
$
c_1(\sE)\cdot C_0 \ge -{e\over 2}
$.

Then

\begin{equation}
c_2(\sE)\ge \left( {a-(m+1)\over 2a}- {a-k(m+1)\over 2a(a-1)} \right)c_1(\sE)^2.
\end{equation}
\end{thm}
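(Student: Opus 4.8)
The plan is to run the same push--pull / Brosius machinery as in the proof of Theorem \ref{valeviehweg_nonbal}, but to exploit balancedness so as to replace the rank-induction (which is unavailable here, since that induction rests on the hypothesis $e\le 0$) by a single application of Moriwaki's Theorem \ref{stoppino}. Concretely, I would form the Brosius sequence (\ref{brosiusgeneral2}) peeling off exactly the top piece $a_r=m+1$, obtaining
\begin{equation*}
0\to (p^\star \sA)(a_r\,C_0)\to \sE \to \sM\otimes \sI_Z\to 0,
\end{equation*}
with $\sM=\sG(a_r\,C_0)$ of rank $q=r-k$ and generic splitting type $(m,\dots,m)$. The point is that, although $\sE$ itself is only balanced (hence not fibrewise semistable), the quotient $\sM$ restricts on a general fibre to $\sO_{\mP^1}(m)^{\oplus(r-k)}$, which \emph{is} $\mu$-semistable; moreover $\sM$ is generically nef, being a quotient of $\sE$ off the finite set $Z$. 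Thus Theorem \ref{stoppino} applies to $\sM$ and yields $c_2(\sM)\ge \tfrac{q-1}{2q}\,c_1(\sM)^2$, which is exactly the inequality (\ref{diseqM}) that the induction would have produced in the $e\le 0$ case.

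With this substitute in hand, the computation of $c_2(\sE)$ from the sequence proceeds verbatim: combining (\ref{prodc1}), (\ref{c2a}), the Moriwaki bound, and $Z\ge 0$, I reach the same intermediate inequality (\ref{sugg_fra}), namely $c_2(\sE)\ge d\,C_0^2+(a_q-a_r)\alpha+c\,\delta$ with $a_q-a_r=-1$. The only genuinely new input is the bound on $\alpha$. Here the $e>0$ nef cone is generated by $L$ and $C_0+eL$; testing generic nefness of $c_1(\sM)\equiv a_\sM C_0+(\delta-\alpha)L$ against the extremal ray $C_0+eL$ gives $c_1(\sM)\cdot(C_0+eL)=\delta-\alpha\ge 0$, i.e.\ the clean bound $\alpha\le \delta$, in place of $\alpha\le \delta+\tfrac12 a_\sM C_0^2$. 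Substituting $\alpha\le\delta$ and then $\delta=-\tfrac{a}{2}C_0^2+\tfrac{c_1(\sE)^2}{2a}$, the $\delta$-coefficient still collapses to the clean factor $\tfrac{a-(m+1)}{2a}$, but because we used $\alpha\le\delta$ rather than $\alpha\le \delta+\tfrac12 a_\sM C_0^2$, the coefficient of $C_0^2$ is now $\tfrac12 a_\sM$ instead of $0$, so that
\begin{equation*}
c_2(\sE)\ge \tfrac12\,a_\sM\,C_0^2+\frac{a-(m+1)}{2a}\,c_1(\sE)^2,\qquad a_\sM=(r-k)m=a-k(m+1).
\end{equation*}

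Since $C_0^2=-e<0$, the residual term $\tfrac12 a_\sM C_0^2=-\tfrac12 a_\sM e$ is negative and must be absorbed; this is where the hypothesis $c_1(\sE)\cdot C_0\ge -\tfrac{e}{2}$ enters, and controlling it is the crux of the argument. Writing $c_1(\sE)\cdot C_0=\delta-ae$, the hypothesis reads $\delta\ge ae-\tfrac{e}{2}$, whence $c_1(\sE)^2=-a^2e+2a\delta\ge a(a-1)e$. For $a>1$ this gives $-\tfrac12 a_\sM e\ge -\tfrac{a_\sM}{2a(a-1)}\,c_1(\sE)^2$, and plugging in $a_\sM=a-k(m+1)$ turns the displayed inequality into the asserted bound. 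The main obstacle is thus not the cohomological set-up but this final conversion: one must verify that the hypothesis is precisely strong enough to trade the unavoidable negative contribution $-\tfrac12 a_\sM e$ for a proportional loss in the $c_1(\sE)^2$ coefficient, which is exactly the correction term $-\tfrac{a-k(m+1)}{2a(a-1)}$. The degenerate case $a_\sM=0$ (equivalently $m=0$) should be recorded separately, where the correction vanishes and the bound reduces to that of Corollary \ref{cor}.
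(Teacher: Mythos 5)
Your proposal is correct and follows essentially the same route as the paper's proof: the same Brosius sequence peeling off the $(m+1)$-part, Moriwaki's theorem applied to the quotient $\sM$ (whose general fibre restriction is semistable) in place of the rank induction, the bound $\alpha\le\delta$ from testing $c_1(\sM)$ against divisors approaching the boundary ray $C_0+eL$, and finally the hypothesis $c_1(\sE)\cdot C_0\ge -e/2$ converted into $C_0^2\ge -c_1(\sE)^2/(a(a-1))$ to absorb the residual term $\tfrac12 a_\sM C_0^2$. Your remark that the degenerate case $a_\sM=0$ (and implicitly $a>1$) should be treated separately is a small point the paper glosses over.
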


\begin{proof} The proof is similar to the one of Theorem \ref {valeviehweg_nonbal}.
By assumption 
the restriction of $\sE$ to a general fiber $L$ of $p\colon Y\to \mP^1$ 
 is balanced. Since by hypothesis $c_{1}(\sE)\cdot L=a=mr+k$, 
the general fiber restriction of $\sE$ is of the type
$$
\sE_{|L} \cong \bigoplus ^k \sO _{\mP ^1} (m+1) \oplus \bigoplus ^{r-k} 
\sO _{\mP ^1} (m).
$$
Then we have again a Brosius type exact sequence:
\begin{equation}\label{brosius_negativo}
0\to p^\star p_\star \sE (-(m+1)C_0) \to \sE (-(m+1)C_0) \to \sG \otimes \sI_Z \to 0,
\end{equation}
where $p^\star p_\star \sE (-(m+1)C_0) $ has rank $k$ and $\sG$ has rank $(r-k)$.

Set $\sA:= 
p_\star \sE (-(m+1)C_0)$, $A:= c_1 (\sA)=$, $\alpha:={\rm{deg}}(A)$ and $\sM:=\sG((m+1)C_0)$,
so that (\ref{brosius_negativo}) becomes
\begin{equation}
0\to p^\star \sA ((m+1)C_0) \to \sE  \to \sM \otimes \sI_Z \to 0,
\end{equation}
and
$$
c_1(\sM)= c_1(\sE) - p^\star A -k(m+1)C_0\equiv
(r-k)m\ C_0+(\delta -\alpha)L.
$$
The main difference in the present proof is the bound on $c_2(\sM)$.
Since the restriction of the Brosius exact sequence (\ref{brosius_negativo}) to the 
general fiber $L\cong \mP^1$ gives
$$
0 \to \bigoplus ^k \sO _{\mP^1} \to \bigoplus ^k \sO _{\mP ^1} \oplus \bigoplus ^{r-k} 
\sO _{\mP ^1} (-1) \to \bigoplus ^{r-k} 
\sO _{\mP ^1} (-1) \to 0,
$$
the restriction of $\sG$ to the general fiber of $Y$ is 
$\mu$ - semistable. Since $\sM$ is a twist of $\sG$ the same holds for the general fiber of $\sM$.
Hence $\sM$ is Bogomolov semistable by Moriwaki Theorem \ref{stoppino}, and we have
$$
c_2 (\sM) \ge {(r-k-1)\over 2 (r-k)} c_1 (\sM)^2={(r-k)(r-k-1)\over 2}m^2 \ C_0^2 + (r-k)m(\delta - \alpha).
$$
By observing that with the notations of the proof of
Theorem \ref {valeviehweg_nonbal} we have
$$
a_r = m+1, \qquad q=r-k, \qquad a_\sM = (r-k)m,
$$ 
the relation 
(\ref{quasifinito}) becomes
\begin{equation}\label{semplificata}
c_2(\sE) \ge k(r-k)m(m+1)C_0^2+(\delta -\alpha)k(m+1)+\alpha(r-k)m+(k-1)(m+1)\alpha +
$$
$$
+{k(k-1)\over 2}(m+1)^2 C_0^2+
{(r-k)(r-k-1)\over 2}m^2 \ C_0^2 + (r-k)m(\delta - \alpha),
\end{equation}
which simplifies as
$$
c_2(\sE) \ge \left(k(r-k)m(m+1) +{k(k-1)\over 2}(m+1)^2 +
{(r-k)(r-k-1)\over 2}m^2 \right) C_0^2 -
$$
$$
\qquad - \alpha+(k+(r-1)m)\delta.
$$
Next we use the generic nefness of $c_1(\sE)$ to bound $\alpha={\rm{deg}}(A)$. 
Let $H$ be a very ample divisor of $Y$ which avoids the points of $Z$ arising in the Brosius sequence (\ref{brosius_negativo}). Since $\sE$ is generically nef, and since $\sM\otimes \sI_Z$ is a quotient of $\sE$,
 $\sG ((m+1)C_0)\otimes \sO _{m_0H}$ is nef for $m_0 >>0$, 
hence $c_1(\sM)\otimes \sO _{m_0H})\ge 0$.

An ample divisor on a ruled surface admitting a negative section $C_0$ is of the type $H\in |xC_{0}+yL|$, with $x>0$ and $y> x\ e$. The condition
$c_1 (\sM) \cdot m_0H\ge 0$ gives
\begin{equation} 
\alpha \le \delta +m(r-k){y-xe\over x};
\end{equation}
in particular $\alpha \le \delta +m(r-k){1\over x}$ for any $x>0$, so 
\begin{equation}\label{muscoli}
\alpha \le \delta.
\end{equation}
Moreover, using againg the trick
\begin{equation}\label{delta}
 \delta =-{a\over 2}C_0^2+ {c_1(\sE)^2\over 2a},
\end{equation}
we get
$$
c_2(\sE) \ge \left(k(r-k)m(m+1) +{k(k-1)\over 2}(m+1)^2 +
{(r-k)(r-k-1)\over 2}m^2 -{a\over 2}(k+(r-1)m-1)\right) C_0^2 +
$$
$$
\qquad +{(k+(r-1)m-1) \over 2a}c_1(\sE)^2,
$$
that is
\begin{equation}\label{dai}
c_2(\sE)\ge {(r-k)m\over 2} C_0^2+{a-(m+1) \over 2a}c_1(\sE)^2.
\end{equation}
The last bound is not satisfactory, since $C_0^2<0$,
so 
we finally use the assumption that 
$
c_1(\sE)\cdot C_0 \ge {C_0^2\over 2},
$
which gives
$
\delta \ge ({1\over 2}-a)\ C_0^2.
$
The expression (\ref{delta}) yields
$
C_0^2 \ge -{c_1(\sE)^2 \over a(a-1)},
$
and by (\ref{dai}) we get
$$
c_2(\sE)\ge \left( {a-(m+1)\over 2a}- {a-k(m+1)\over 2a(a-1)} \right)c_1(\sE)^2.
$$

\end{proof}

\section{The normalized relative canonical divisor}

In this section we shall apply the Bogomolov type inequalities to the Tschirnhausen sheaf of a finite cover of a Hirzebruch surface.
Indeed, by the Viehweg Weak Positivity Theorem \cite{V}, the Tschirnhausen sheaf is weakly positive away from the branch locus, and hence nef on the complement of the branch locus (see also \cite{La}), so it is 
in particular generically nef.
This will allow us to bound the relative Euler characteristic $\chi_f$ 
of a fibration factoring through a finite cover.

Moreover, we shall
introduce the {\it normalized relative canonical divisor} of a finite morphism $\pi$ and we shall show that its selfintersection is related with the slope.

We first recall how to determine the invariants and the slope of a fibration factoring through a finite cover.

\begin{defn} Let $\pi:S \to Y$ be a finite cover of degree $n$ between smooth surfaces. Then the sheaf
$
\pi_\star \omega_{S/Y}
$
is locally free of rank $n$ and we can consider the {\it trace} map, which is surjective:
\begin{equation}\label{traccio}
tr :\pi_\star \omega_{S/Y}
 \to \sO _Y.
\end{equation}
 The kernel $\sE$ is a locally free sheaf of rank $n-1$ on $Y$
the exact sequence
\begin{equation}\label{traccia}
0\to \sE \to \pi_\star \omega_{S/Y} \to \sO _Y \to 0
\end{equation}
splits.

Following \cite{Mi}, it is customary to call $\sE$ the {\it Tschirnhausen sheaf} of the finite morphism $\pi$; in fact, Miranda calls Tschirnhausen module the sheaf $\sE^\vee$. 	
	\end{defn}

\begin{lem}\label{scriviamolo}
Let $S,Y$ be smooth surfaces, and let $\pi\colon S \to Y$ be a 
finite cover of degree $n$ with relative canonical divisor 
$K_{S/Y}$. Then in the rational Chow ring $A(Y) \otimes \mathbb Q$ we have
\begin{enumerate}
\item \label{R} $\pi_\star K_{S/Y}\equiv 2 c_1 (\sE)$,
\item \label{chi} $ \chi (\sO_{S})= n \chi (\sO _Y) +{1\over 2} c_1 
(\sE) \cdot K_Y +{1\over 2} c_1 (\sE)^2 - c_2 (\sE)$;
\end{enumerate}
\end{lem}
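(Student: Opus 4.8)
The plan is to reduce both assertions to Grothendieck--Riemann--Roch for the finite morphism $\pi$ together with Grothendieck duality, exploiting that a finite morphism between smooth surfaces is flat (miracle flatness: $Y$ is regular, $S$ is Cohen--Macaulay, and the fibres are $0$-dimensional) and has no higher direct images. First I would record three structural facts. Since $\pi$ is finite, $R^i\pi_\star\sF=0$ for $i>0$, so by the Leray spectral sequence $\chi(\sO_S)=\chi(\pi_\star\sO_S)$. Grothendieck duality for the finite flat morphism $\pi$ gives $\pi_\star\omega_{S/Y}\cong\sEom(\pi_\star\sO_S,\sO_Y)=(\pi_\star\sO_S)^\vee$. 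Finally, since the trace sequence (\ref{traccia}) splits (as recalled in the preceding definition), we have $\pi_\star\omega_{S/Y}\cong\sE\oplus\sO_Y$, and dualizing, $\pi_\star\sO_S\cong\sE^\vee\oplus\sO_Y$; in particular $c_1(\pi_\star\sO_S)=-c_1(\sE)$.

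For part (\ref{R}) I would apply GRR to $\sO_S$: as $\pi$ is finite, $\pi_!=\pi_\star$, so $\mathrm{ch}(\pi_\star\sO_S)\,\mathrm{td}(Y)=\pi_\star\,\mathrm{td}(S)$. Reading off the codimension-one part and using $\mathrm{td}_1=-\tfrac12 K$ on both surfaces yields $c_1(\pi_\star\sO_S)-\tfrac{n}{2}K_Y=-\tfrac12\pi_\star K_S$. The projection formula $\pi_\star\pi^\star K_Y=nK_Y$ then rewrites the right-hand side as $-\tfrac12\pi_\star(K_S-\pi^\star K_Y)=-\tfrac12\pi_\star K_{S/Y}$. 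Combining with $c_1(\pi_\star\sO_S)=-c_1(\sE)$ gives $\pi_\star K_{S/Y}\equiv 2c_1(\sE)$, which is (\ref{R}). (Geometrically this encodes the identities $\pi_\star K_{S/Y}=B$ and $c_1(\sE)=\tfrac12 B$ for the branch divisor $B$, but the GRR route avoids any discussion of ramification multiplicities.)

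For part (\ref{chi}) I would combine Leray with ordinary Riemann--Roch on the surface $Y$. From $\chi(\sO_S)=\chi(\pi_\star\sO_S)=\chi(\sE^\vee)+\chi(\sO_Y)$ it remains to compute $\chi(\sE^\vee)=\int_Y\mathrm{ch}(\sE^\vee)\,\mathrm{td}(Y)$. Here the only points needing care are the signs in $\mathrm{ch}(\sE^\vee)$ — dualizing flips $c_1$ but leaves $\mathrm{ch}_2=\tfrac12(c_1(\sE)^2-2c_2(\sE))$ unchanged — and the use of Noether's formula $\chi(\sO_Y)=\tfrac{1}{12}(K_Y^2+c_2(Y))$ to absorb the rank contribution. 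The degree-two integrand then gives $\chi(\sE^\vee)=(n-1)\chi(\sO_Y)+\tfrac12 c_1(\sE)\cdot K_Y+\tfrac12 c_1(\sE)^2-c_2(\sE)$, and adding $\chi(\sO_Y)$ produces the stated formula.

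The computations are otherwise routine Chern-class bookkeeping; the only genuine inputs are the vanishing of higher direct images, duality in the form $\pi_\star\omega_{S/Y}\cong(\pi_\star\sO_S)^\vee$, and GRR for part (\ref{R}). I expect the main place to be careful is the projection-formula step and keeping signs consistent, together with checking that the duality isomorphism is the one compatible with the trace map (\ref{traccio}), so that the splitting of (\ref{traccia}) dualizes correctly to the asserted splitting $\pi_\star\sO_S\cong\sE^\vee\oplus\sO_Y$.
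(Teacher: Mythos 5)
Your argument is correct; both parts check out, and the underlying engine is the same as the paper's, namely Grothendieck--Riemann--Roch for the finite flat morphism $\pi$. The difference is in where you apply it: the paper runs GRR once, directly on the sheaf $\omega_{S/Y}$ (using $\pi_!\omega_{S/Y}=\pi_\star\omega_{S/Y}$ and $\mathrm{ch}(\pi_\star\omega_{S/Y})=n+c_1(\sE)+\mathrm{ch}_2(\sE)$ from the split trace sequence), and reads off part (1) from the codimension-one component and part (2) from the codimension-two component, where the terms $\pi_\star(K_{S/Y}^2)$ and $\pi_\star(K_{S/Y}\cdot K_S)$ combine via $K_S\sim\pi^\star K_{S}{}$... more precisely via $K_S\sim\pi^\star K_Y+K_{S/Y}$ and the projection formula. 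You instead apply GRR to $\sO_S$ and import the extra input of relative duality, $\pi_\star\omega_{S/Y}\cong(\pi_\star\sO_S)^\vee$, to translate $c_1(\pi_\star\sO_S)=-c_1(\sE)$; and for part (2) you bypass the codimension-two GRR identity altogether, using only $\chi(\sO_S)=\chi(\pi_\star\sO_S)$ (Leray) plus Hirzebruch--Riemann--Roch for $\sE^\vee$ on the surface $Y$. What your route buys is that the quadratic bookkeeping happens entirely downstairs on $Y$, with no pushforwards of degree-two cycles from $S$; what it costs is the extra appeal to duality and to flatness of $\pi$ (miracle flatness), neither of which the paper needs since it works with $\omega_{S/Y}$ from the start. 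Both computations agree with the stated formulas, so the proposal stands as a valid, mildly different derivation.
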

\begin{proof} 
The first relation is well known in the case of flat finite morphisms.
The result follows from
Grothendieck-Riemann-Roch Theorem applied
to the morphism $\pi\colon S\to Y$ and the sheaf
$\omega_{S/Y}$.

The Grothendieck-Riemann-Roch Theorem asserts that for a proper morphism $\pi$ of smooth varieties we have
$$
{\rm ch} (\pi_! \ \omega_{S/Y}) \cdot {\rm td}\ {\sT}_Y = \pi_\star 
({\rm ch }\ \omega_{S/Y} \cdot {\rm td} \ {\sT}_S).
$$

As $R^1 \pi_\star \omega_{S/Y}=0$ since $\pi$ is finite, we have
$$
\pi_! \ \omega_{S/Y}=\pi_\star \omega_{S/Y}=\sE.
$$
This yields
$$
(n+c_1 (\sE)+{1\over 2}(c_1 ^2 (\sE) -c_2 (\sE)) \cdot (1-{1\over 2}K_Y +\chi(\sO 
_Y))=\pi_\star ((1+ K_{S/Y} + {1\over 2} K_{S/Y}^2)\cdot (1-{1\over 2} K_S+\chi(\sO_S)).
$$

The divisorial part satisfies
$$
c_1 (\sE)- {n\over 2} K_Y=\pi_\star (K_{S/Y}- {1\over 2} K_S).
$$
As $K_S \sim \pi^\star K_Y + K_{S/Y}$, we have $\pi_\star K_S \equiv nK_Y + \pi_\star K_{S/Y}$ and the first claim follows.

The equality between the codimension two cycles gives
formula (\ref{chi}).
\end{proof}

\begin{defn}
A {\it fibration} $f\colon S\to B$ is a flat surjective morphisms between a smooth surface $S$ and a smooth curve $B$ with connected fibers, such that if $x\in B$ is general then $F_x:=f^{-1}(x)$ is a smooth curve.

Following Xiao \cite{X}, we can associate with $f\colon S \to B$ a rational number $s(f)$, called the {\it slope} of $f$, defined as:
$$
s(f): =\frac{K^2_{f}}{{\chi_f}}
$$
\noindent 
where $K_f=K_S -f^\star K_{B}$ is the relative canonical divisor, 
$\chi _f : = {\rm deg}\ f_\star  \omega _f $, and $\omega _f := {\sO}_S(K_f)$.

\end{defn}
\begin{rmk} 
We recall the well known relations:
\begin{equation}\label{k2chi}
K_f ^2=K_S ^2-8(g-1)(g(B)-1), \qquad \chi _f = \chi (\sO _S)-(g-1)(g(B)-1).
\end{equation}
\end{rmk}
\begin{cor}\label{terminivari} 
 Let $f\colon S\to B$ be a
 fibration, which factorises through a finite cover $\pi: S \to Y$ of a ruled surface $Y$.
  Then 
\begin{equation}\label {kfchif}
 K_f ^2
 =K_{S/Y}^2-{4\over (g+n-1)} c_{1}(\sE)^{2}, \qquad
 \chi_f = {(g+n-2)\over 2 (g+n-1)} c_1 ^2(\sE) - c_2 (\sE).
 \end{equation}
 
 \begin{equation}\label{laformula}
 s(f)=\frac{K_f ^2}{ \chi_f}=\frac
 {K_{S/Y}^2-{4\over (g+n-1)} c_{1}(\sE)^{2}}{{(g+n-2)\over 2 (g+n-1)} 
 c_1 ^2(\sE) - c_2 (\sE)}.
 \end{equation}

 \end{cor}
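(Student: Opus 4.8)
The plan is to reduce both identities in (\ref{kfchif}) to Lemma \ref{scriviamolo} together with the standard relations (\ref{k2chi}), after rewriting everything in terms of $c_1(\sE)$, $c_2(\sE)$ and the numerical data of the ruling $p\colon Y\to B$. The one genuinely geometric input I will need is the value of the fibre degree $c_1(\sE)\cdot L$, where $L$ is a general fibre of $p$; once that is identified, all the remaining steps are intersection-theoretic bookkeeping on the ruled surface $Y$.

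First I would record the decomposition of $K_f$. Since $f=p\circ\pi$ and $K_S\sim \pi^\star K_Y+K_{S/Y}$, setting $K_{Y/B}=K_Y-p^\star K_B$ gives $K_f=K_S-f^\star K_B\equiv \pi^\star K_{Y/B}+K_{S/Y}$. Squaring and applying the projection formula together with Lemma \ref{scriviamolo}(\ref{R}) yields
\[
K_f^2= n\,K_{Y/B}^2+4\,K_{Y/B}\cdot c_1(\sE)+K_{S/Y}^2,
\]
since $(\pi^\star K_{Y/B})^2=n\,K_{Y/B}^2$ by degree and $\pi^\star K_{Y/B}\cdot K_{S/Y}=K_{Y/B}\cdot\pi_\star K_{S/Y}=2\,K_{Y/B}\cdot c_1(\sE)$. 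Thus the whole computation of $K_f^2$ comes down to evaluating $K_{Y/B}^2$ and $K_{Y/B}\cdot c_1(\sE)$.

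Both are read off from the explicit class $K_{Y/B}\equiv -2C_0-eL$ (so that $K_{Y/B}\cdot L=-2$), together with $c_1(\sE)\equiv aC_0+\delta L$. A direct intersection computation gives $K_{Y/B}^2=0$ and $K_{Y/B}\cdot c_1(\sE)=ae-2\delta=-c_1(\sE)^2/a$, the last equality because $c_1(\sE)^2=-a^2e+2a\delta$. To identify the denominator $a$ with $g+n-1$ I would intersect Lemma \ref{scriviamolo}(\ref{R}) with a general fibre: the general fibre $F=\pi^{-1}(L)$ is a smooth genus $g$ curve, so adjunction gives $2g-2=K_S\cdot F$, whence $K_{S/Y}\cdot F=K_S\cdot F-\pi^\star K_Y\cdot F=(2g-2)-n(K_Y\cdot L)=2g-2+2n$; on the other hand $K_{S/Y}\cdot F=\pi_\star K_{S/Y}\cdot L=2\,c_1(\sE)\cdot L=2a$, so $a=g+n-1$. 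Substituting $K_{Y/B}^2=0$ and $K_{Y/B}\cdot c_1(\sE)=-c_1(\sE)^2/(g+n-1)$ into the displayed expression for $K_f^2$ produces the first formula.

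For $\chi_f$ I would start from $\chi_f=\chi(\sO_S)-(g-1)(g(B)-1)$ in (\ref{k2chi}) and insert Lemma \ref{scriviamolo}(\ref{chi}) with $\chi(\sO_Y)=1-g(B)$. The only new term is $c_1(\sE)\cdot K_Y$; writing $K_Y=K_{Y/B}+(2g(B)-2)L$ and using $c_1(\sE)\cdot L=a=g+n-1$ together with the identity above gives $c_1(\sE)\cdot K_Y=-c_1(\sE)^2/(g+n-1)+(2g(B)-2)(g+n-1)$. After substitution all terms proportional to $(g(B)-1)(g+n-1)$ cancel, leaving $\chi_f=\tfrac12 c_1(\sE)^2\bigl(1-1/(g+n-1)\bigr)-c_2(\sE)$, which is the stated value, and (\ref{laformula}) is then just the quotient $K_f^2/\chi_f$. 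The main obstacle is keeping the intersection bookkeeping on $Y$ consistent, and in particular establishing the single key identity $K_{Y/B}\cdot c_1(\sE)=-c_1(\sE)^2/(g+n-1)$, whose denominator is exactly the fibre degree $a$ furnished by the genus computation; once this and $K_{Y/B}^2=0$ are secured, both formulas and the slope follow mechanically.
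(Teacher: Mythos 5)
Your proof is correct and follows essentially the same route as the paper: both reduce the identities to Lemma \ref{scriviamolo} together with intersection theory in $\mathrm{NS}(Y)$, the only cosmetic difference being that you expand $K_f=\pi^\star K_{Y/B}+K_{S/Y}$ directly (so that $K_{Y/B}^2=0$ absorbs the $8(g-1)(g(B)-1)$ correction), whereas the paper expands $K_S^2=K_{S/Y}^2+2K_{S/Y}\cdot\pi^\star K_Y+nK_Y^2$ and subtracts that term afterwards. You also supply the adjunction argument identifying the fibre degree $c_1(\sE)\cdot L=g+n-1$, which the paper's equation (\ref{c1}) merely asserts, and your coefficient $4$ in front of the cross term (coming from $\pi_\star K_{S/Y}\equiv 2c_1(\sE)$ via the projection formula) is the correct one.
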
 
 
\begin{proof} We can write $K_{S}^{2}=K_{S/Y}^{2} 
 +2K_{S/Y}\cdot\pi^{\star}K_{Y}+nK_{Y}^{2}$, then by projection formula and Lemma \ref{scriviamolo}
 $K_{S}^{2}=K_{S/Y}^2+nc_{1}(\sE)\cdot K_{Y}+nK_{Y}^{2}$.
 As $K_f ^2 = K_S ^2 - 8 (g-1)(b-1)$, where $b =g(B)$, we have
 $$
 K_f ^2 = {K_{S/Y}^2+nc_{1}(\sE)\cdot 
 K_{Y}+nK_{Y}^{2}-8(g-1)(b-1)}.
 $$
Finally, by choosing the generators of the Neron-Severi group of $Y$ to be the 
classes $[C_{0}]$ and $[L]$ where $L\in{\rm{NS}}(Y)$ is the class of a ruling and 
$C_{0}\in{\rm{NS}}(Y)$ is the class of a section of minimal 
selfintersection, we may write
\begin{equation}\label{c1}
c_1(\sE) \equiv (g+n-1)C_0 + \left( {c_1 (\sE)^2 \over 2(g+n-1)} +(g+n-1)\ C_0^2\right)L,
\end{equation} 
and the first formula follows.

Taking into account that $\chi_f= \chi(\sO_S)-(g-1)(b-1)$, 
the formula for $\chi_f$ follows from Lemma \ref{scriviamolo}, (2).
\end{proof}

Now we introduce the normalised relative canonical divisor of a finite cover, and we shall see that it is closely related to the slope
of the induced fibration.
Such a connection is not surprising, as a similar argument has 
already been used in such a context.

For instance,
the Cornalba - Harris theory for bounding the slope of any fibration
$f$
relies 
on the study of the {\it normalized relative canonical} divisor
of a fibration $f\colon S \to B$
$$
\mK _f:=K _{f} -{1\over g} f^\star c_1 (f_\star \omega _{f}), \quad \widetilde \omega _f := \sO _S (\mK _f)
$$
and on the {\it normalized
Hodge bundle}
$$
\mE _f := f_\star \widetilde \omega _f.
 $$
Indeed, the pseudo - effectivity of $f_\star (\mK _f ^2)$, proved by Cornalba and Harris in \cite [Theorem 1.1, Proposition 2.9 and Section 4]{CH},
under the assumption that the Hilbert points of the general fibre are semistable,
is a crucial step in their proof of the classical bound on the slope
$$
s(f) \ge 4- {4\over g}.
$$

 A similar task has been used by Fedorchuk and Jensen in \cite {FJ}, 
 who obtained as a straight consequence of the positivity of 
 $c_1 (f_\star \widetilde \omega _{f} ^{\otimes 2})$ the result that if $S\to B$
 is a flat family of Gorenstein curves with the generic 
 fiber a canonically embedded curve whose $2$nd Hilbert point 
 is semistable (e.g. with the generic fiber a general trigonal curve), 
 then the slope satisfies the inequality $s(f) \ge 5 - {6\over g}$.
 
Also in the context of projective vector bundles
$\pi: \mP (\sG) \to Y$ fibered in $\mP ^{r-1}$ over a variety $Y$ a similar divisor is studied, 
namely the so called {\it normalized tautological divisor}
$$
\mT_\sG := T_{\mP (\sG)} - {1\over r}\pi ^\star c_1 (\sG) = -{1\over r} K_{\mP (\sG)/Y},
$$
and the nefness of such a divisor has been investigated by N. Nakayama \cite{Nak}.
More precisely, Nakayama proved the following result:

\begin{thm}\label{nakayama}
Let $\mathcal{G}$ be a rank $r$ vector bundle on a smooth complex projective
variety $Y$ of dimension $d\ge 2$. Then the following conditions are equivalent:
\begin{itemize}
\item $\mT_\sG$ is nef;
\item $\mathcal{G}$ is $\mu$-semistable and 
$\Big(c_2(\mathcal{G})-{(r-1)\over 2r} c_1(\mathcal{G})^2\Big)\cdot A^{d-2}=0$
for an ample divisor $A$.
\end{itemize}
\end{thm}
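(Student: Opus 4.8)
The plan is to prove the two implications separately, translating nefness of $\mT_\sG$ into intersection numbers on $\mP(\sG)$ via the Grothendieck relation and the projection formula. Write $\xi := T_{\mP(\sG)}$, $\pi\colon \mP(\sG)\to Y$, $d=\dim Y$, and recall that $\pi_\star(\xi^{r-1+j})=s_j(\sG)$ with $s_0=1$, $s_1=c_1$, $s_2=c_1^2-c_2$. Two identities drive everything. First, since $\mT_\sG=\xi-\tfrac1r\pi^\star c_1(\sG)$ is built to kill $s_1$, a direct expansion gives $\pi_\star(\mT_\sG^{\,r})=0$ and
\[ \pi_\star(\mT_\sG^{\,r+1}) = -\Big(c_2(\sG)-\tfrac{r-1}{2r}c_1(\sG)^2\Big). \]
Second, for a quotient bundle $\sG\twoheadrightarrow\sQ$ of rank $r'$ the induced embedding $\mP(\sQ)\hookrightarrow\mP(\sG)$ satisfies $\xi|_{\mP(\sQ)}=T_{\mP(\sQ)}$, whence $\pi_\star\big((\mT_\sG|_{\mP(\sQ)})^{r'}\big)=c_1(\sQ)-\tfrac{r'}{r}c_1(\sG)$.

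For the forward direction, assume $\mT_\sG$ is nef and fix an ample $A$. Intersecting the second identity with $A^{d-1}$ yields $r'\big(\mu_A(\sQ)-\mu_A(\sG)\big)$; since $\mT_\sG$ and hence its restriction to $\mP(\sQ)$ is nef, and $\pi^\star A$ is nef, this number is $\ge 0$. As $\sQ$ ranges over all quotients this is exactly $\mu_A$-semistability of $\sG$, for every ample $A$. Next, intersecting the first identity with $A^{d-2}$ and using $\mT_\sG^{\,r+1}\cdot\pi^\star A^{d-2}\ge 0$ gives $\big(c_2-\tfrac{r-1}{2r}c_1^2\big)\cdot A^{d-2}\le 0$; the Bogomolov inequality for the $\mu_A$-semistable bundle $\sG$ gives the reverse inequality, so the discriminant term vanishes.

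For the converse I would reduce nefness to curves. It suffices to show $\mT_\sG\cdot\gamma\ge 0$ for every irreducible curve $\gamma\subset\mP(\sG)$. If $\pi(\gamma)$ is a point, then $\gamma$ lies in a fiber $\mP^{r-1}$ on which $\mT_\sG$ restricts to $\sO(1)$, so the inequality is immediate. Otherwise $\pi(\gamma)$ is a curve; let $\phi\colon C\to Y$ be the normalization of $\pi(\gamma)$ composed with its inclusion, and base change to $\Phi\colon \mP(\phi^\star\sG)\to\mP(\sG)$, under which $\Phi^\star\mT_\sG=\mT_{\phi^\star\sG}$. Lifting $\gamma$ and applying the projection formula makes $\mT_\sG\cdot\gamma\ge 0$ equivalent to nefness of $\mT_{\phi^\star\sG}$ over the smooth curve $C$. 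By Miyaoka's criterion (Theorem \ref{miy}), applied to $\phi^\star\sG\otimes(\det\phi^\star\sG)^{-1/r}$ whose tautological divisor is precisely $\mT_{\phi^\star\sG}$, this nefness is equivalent to semistability of $\phi^\star\sG$. Hence the theorem reduces to showing that a $\mu$-semistable bundle with $\big(c_2-\tfrac{r-1}{2r}c_1^2\big)\cdot A^{d-2}=0$ has semistable restriction $\phi^\star\sG$ for \emph{every} $\phi\colon C\to Y$.

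This last implication is the main obstacle. Restriction of a $\mu_A$-semistable bundle to a general complete-intersection curve is semistable by Mehta--Ramanathan (Theorem \ref{mr}), but here semistability is needed on \emph{all} curves, and it is the vanishing of the discriminant that upgrades ``general'' to ``all''. The plan is to pass to the Jordan--H\"older graded object, whose factors are $\mu_A$-stable and, by the equality case of the Bogomolov inequality, again satisfy the zero-discriminant condition, thereby reducing to a $\mu_A$-stable bundle $\sG$ with $\big(c_2-\tfrac{r-1}{2r}c_1^2\big)\cdot A^{d-2}=0$. For such a bundle the Kobayashi--Hitchin correspondence (Uhlenbeck--Yau, Bando--Siu) furnishes a Hermitian--Einstein metric, and the vanishing discriminant forces its curvature to be central, i.e. $\sG$ is projectively flat; projective flatness is inherited by every $\phi^\star\sG$, and a projectively flat bundle on a curve is polystable, hence semistable. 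Semistability of the graded pieces then propagates to $\sG$ and its restrictions through the filtration. I expect this transcendental input (or Langer's algebraic substitute, via boundedness of instability together with Frobenius/finite-cover pullbacks) to be the only step requiring machinery beyond the elementary intersection theory of $\mP(\sG)$; the reductions in the preceding paragraphs are formal.
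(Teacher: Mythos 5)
The paper offers no proof of Theorem \ref{nakayama}: it is quoted verbatim from Nakayama's memoir \cite{Nak} and used as a black box, so there is no internal argument to compare yours against. I will therefore assess your proposal on its own terms.

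Your forward direction is essentially complete and correct. Both push-forward identities check out: with $\xi=T_{\mP(\sG)}$ and $\pi_\star\xi^{r-1+j}=s_j(\sG)$ one gets $\pi_\star(\mT_\sG^{\,r})=0$ and $\pi_\star(\mT_\sG^{\,r+1})=-\bigl(c_2(\sG)-\tfrac{r-1}{2r}c_1(\sG)^2\bigr)$, and the quotient computation gives $r'(\mu_A(\sQ)-\mu_A(\sG))\ge 0$. The only caveat is that $\mu$-semistability must be tested against torsion-free quotient \emph{sheaves}, not just quotient bundles; this is repaired by taking the closure of $\mP(\sQ|_U)$ for $U$ the locus where $\sQ$ is locally free, whose complement has codimension two and so does not affect $c_1\cdot A^{d-1}$. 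The converse is correctly reduced, via base change to normalizations of curves and Miyaoka's criterion (Theorem \ref{miy}), to the assertion that a $\mu_A$-semistable $\sG$ with vanishing discriminant pulls back to a semistable bundle on \emph{every} smooth curve mapping to $Y$, and you rightly identify this as the crux. Your proposed route --- Jordan--H\"older reduction, the equality case of Bogomolov forcing each stable factor to have zero discriminant, then Kobayashi--Hitchin and the L\"ubke equality case yielding projective flatness, which is preserved under pullback and gives polystability on curves --- is the standard and correct one; it is in substance Nakayama's own argument and is close to the Demailly--Peternell--Schneider theory of numerically flat bundles. As written, however, it is a plan rather than a proof: the claim that the graded pieces inherit zero discriminant requires the exact-sequence identity for $\Delta$ combined with the Hodge index theorem, which simultaneously shows that the classes $c_1(\sQ_i)/\mathrm{rk}\,\sQ_i$ are all numerically proportional (this is needed later, so that the pulled-back filtration on an arbitrary curve has graded pieces of equal slope and the extension argument closes); the Jordan--H\"older factors are a priori only reflexive, so Bando--Siu is genuinely needed and one must argue that a stable reflexive sheaf with zero discriminant is in fact locally free; and the pullback of the filtration along a curve meeting the singular locus of the factors needs a word about torsion. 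None of these is an obstruction, but each must be supplied before the sketch becomes a proof.
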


It turns out that in our context, as we are dealing with a finite morphism
and a rank $(n-1)$ torsion free sheaf $\sE$ with $c_1 (\sE)=c_1 (\pi_\star \omega _{S/Y})$,
it is natural to give the following Definition:

\begin{defn}\label{normalised relative} Let $\pi\colon S \to Y$ be a finite morphism of degree $n$. The $\mQ$ - divisor
$$
\Lambda _\pi := K_{S/Y} - {1\over (n-1)}\pi ^ \star (c_1 (\pi _\star \omega _{S/Y})).
$$
\noindent is called the {\it normalised relative canonical divisor} of $\pi$.
\end{defn}
 
 To explain the reason which leads to the definition of $\lp$, we need to recall
the following well-known result (see \cite{CE}).

\begin{thm}\label{Casnati}
Let $Y$ be an integral surface and let $ \pi:S \to Y$ be a Gorenstein cover of degree $n\ge 3$. 
There exists a unique $\mP ^{n-2}$-bundle
$\pi _Y:\mP \to Y$ and an embedding $i:S \to \mP$ such that $\pi=\pi _Y \circ i$. Moreover
 $\mP\cong \mP (\sE)$ and the ramification divisor $R$ satisfies:
$$
\sO _S (R) \cong i^\star \sO _{\mP (\sE)} (1).
$$
\end{thm}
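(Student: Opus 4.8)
The plan is to realise $S$ inside $\mP(\sE)$ by a relative version of the canonical embedding of a curve, performed fibrewise over $Y$ and then globalised by cohomology and base change. I would first record the structural facts attached to the cover. Since $\pi$ is a finite flat Gorenstein cover (flatness holding in the smooth cases of interest by miracle flatness), the relative dualizing sheaf $\omega_{S/Y}$ is a line bundle, and in fact $\omega_{S/Y}\cong\sO_S(R)$, the standard description of the relative dualizing sheaf of a Gorenstein cover through its ramification divisor. Relative duality identifies $\pi_\star\omega_{S/Y}\cong\sEom(\pi_\star\sO_S,\sO_Y)$, and the splitting trace sequence (\ref{traccia}) presents the rank $(n-1)$ Tschirnhausen sheaf $\sE$ as the kernel of the trace $\mathrm{tr}\colon\pi_\star\omega_{S/Y}\to\sO_Y$. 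Flatness guarantees that $\pi_\star\sO_S$ and $\pi_\star\omega_{S/Y}$ are locally free and commute with base change, so that for every $y\in Y$ the fibre $(\pi_\star\omega_{S/Y})\otimes k(y)$ is the space of global sections of the dualizing sheaf of the length $n$ Gorenstein scheme $Z:=\pi^{-1}(y)$.

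Next I would construct the morphism. The counit of adjunction $\pi^\star\pi_\star\omega_{S/Y}\to\omega_{S/Y}$, precomposed with $\pi^\star\sE\hookrightarrow\pi^\star\pi_\star\omega_{S/Y}$, gives a map $\pi^\star\sE\to\omega_{S/Y}=\sO_S(R)$. I claim this map is surjective, which by Nakayama may be tested on each fibre $Z$: there the assertion becomes that the $(n-1)$-dimensional trace-complement $\sE_y=\Ker(\mathrm{tr}\colon H^0(\omega_Z)\to k(y))$ is base-point free. Granting surjectivity, the universal property of $\mP(\sE)=\Proj\,\mathrm{Sym}\,\sE$ in the Grothendieck convention turns the quotient $\pi^\star\sE\twoheadrightarrow\sO_S(R)$ into a $Y$-morphism $i\colon S\to\mP(\sE)$ with $\pi=\pi_Y\circ i$ and $i^\star\sO_{\mP(\sE)}(1)\cong\sO_S(R)$, which is precisely the asserted relation on the ramification divisor. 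To see that $i$ is a closed embedding I would invoke the fibrewise criterion: since $S$ is proper and flat over $Y$ and $i$ is a $Y$-morphism, it suffices that each $i_y\colon Z\to\mP^{n-2}$ is a closed immersion, and this then lifts to a global closed immersion by flatness.

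The main obstacle is the purely fibrewise statement, which I would isolate as a lemma on finite Gorenstein $k$-algebras: a zero-dimensional Gorenstein scheme $Z$ of length $n\ge 3$ is canonically embedded by the trace-complement system $\sE_y$ as an arithmetically Gorenstein subscheme of $\mP^{n-2}$. This is exactly where the hypothesis $n\ge 3$ is essential, the target being empty or a single point otherwise. The proof rests on Gorenstein duality for Artinian algebras: the perfect pairing $\sO_Z\times\omega_Z\to k(y)$, $(a,\phi)\mapsto\mathrm{tr}(a\phi)$, lets one rephrase base-point freeness and the separation of points and tangent vectors as nondegeneracy statements for this pairing restricted away from the trace direction, which I would verify by Macaulay's apolarity together with local duality. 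Feeding this lemma back into the two previous steps through cohomology and base change promotes the fibrewise conclusions to the global surjectivity of $\pi^\star\sE\to\sO_S(R)$ and to the global closed immersion $i$.

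Finally, uniqueness is formal: $\sE$ is canonically attached to $\pi$ via the trace sequence and $i$ is built from the canonical counit, so any factorisation $\pi=\pi_Y\circ i$ through a $\mP^{n-2}$-bundle with $i^\star\sO(1)\cong\omega_{S/Y}$ recovers the bundle as $\mP\big(\pi_\star(i^\star\sO(1))/\sO_Y\big)=\mP(\sE)$ and the embedding as the canonical one, so the data are unique up to unique isomorphism.
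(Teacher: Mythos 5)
First, a point of comparison: the paper does not prove this statement at all --- it is recalled verbatim as the Casnati--Ekedahl structure theorem with a bare citation to \cite{CE} --- so your proposal can only be measured against the original source, not against an argument in the paper. Measured that way, your architecture is exactly the standard one: identify $\sO_S(R)$ with $\omega_{S/Y}$, use relative duality and the split trace sequence to produce the rank $(n-1)$ sheaf $\sE$, form the evaluation map $\pi^\star\sE\to\omega_{S/Y}$, check surjectivity and the immersion property fibre by fibre via cohomology and base change, and conclude by the universal property of $\Proj \mathrm{Sym}\,\sE$. All of that is correct and is how \cite{CE} proceeds.

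The genuine gap is that the step you isolate as ``a lemma on finite Gorenstein $k$-algebras'' is not a verification to be delegated to ``Macaulay's apolarity together with local duality'': it is the entire content of the theorem, and your one-line description of it (``nondegeneracy statements for this pairing restricted away from the trace direction'') does not capture what actually has to be proved. Concretely, for a local fibre write $A=\sO_Z$, $\omega_Z=\Hom_k(A,k)\cong A\cdot\ell$ with $\ell$ a generator (equivalently, $\ell$ nonzero on the socle), and $V=\Ker(\phi\mapsto\phi(1))=I\ell$ with $I=\Ker\ell\subset A$ of codimension one. Base-point freeness is the easy half ($I\not\subset\mathfrak m$, since otherwise $\ell$ would kill the socle). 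The hard half is the closed-immersion claim: one must show that for a unit $u\in I$ the $k$-subalgebra of $A$ generated by $u^{-1}I$ is all of $A$. A general codimension-one subspace containing $1$ does \emph{not} generate (e.g.\ $\langle 1,x^2\rangle\subset k[x]/(x^3)$), so the one-dimensionality of the socle and the perfection of the pairing $(a,b)\mapsto\ell(ab)$ must enter in an essential way --- this is exactly where non-Gorenstein length-$n$ subschemes fail to embed in $\mP^{n-2}$, and it is not a formal consequence of ``nondegeneracy.'' Until that computation is carried out, your proposal is a correct reduction, not a proof. A secondary, smaller gap: the uniqueness in the statement is uniqueness of the pair $(\mP,i)$ subject only to $\pi=\pi_Y\circ i$, whereas your uniqueness argument presupposes $i^\star\sO(1)\cong\omega_{S/Y}$; one still has to show that any embedding of $S$ into a $\mP^{n-2}$-bundle compatible with $\pi$ is fibrewise nondegenerate and has $\sO(1)$ restricting to $\omega_{S/Y}$ up to a twist pulled back from $Y$.
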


 \begin{rmk}
 Consider the embedding $S\subseteq \mP(\sE)$. Since 
 $(T_{\mP(\sE)})_{\vert S}=K_{S/Y}$, we get that
 $$
 \lp = (\mT_\sE )_{S}.
 $$
 \end{rmk}

The connection between the slope and $\lp$ is given by the following:

\begin{prop}\label{facilfacil} 
Let $f:S \to B$ be a fibration, with general fiber $F$ a smooth curve of genus $g$.

Assume that $f$ factorises through a finite degree $n$ cover $\pi:S \to Y$, where $Y$ is a ruled surface,
and assume that the general fiber restriction of the Tschirnhausen sheaf $\sE$ is a twist of the trivial sheaf. 

By setting
\begin{equation}\label{chimax}
\chi_0^{max}:={g\over 2 (n-1)(g+n-1)}c_{1}(\sE)^2,
\end{equation}
we have
 $$
 \chi_0^{max}\ge \chi_f
 $$ 
 and the following equality holds:
\begin{equation}\label{kf}
K_f ^2= \sF (n,g) \chi _0^{max} +\Lambda _\pi ^2,
\end{equation}
where
$$
\sF (n,g)=6-\frac{2}{n-1}-\frac{2n}{g}
$$
is the conjectural bound of Stankova (see \cite[Conjecture 13.3]{S}).

In particular 
\begin{equation}\label{boundslope}
s(f)\geq \sF (n,g) +\frac{\Lambda _\pi ^2}{\chi_0 ^{max}}.
\end{equation}

\end{prop}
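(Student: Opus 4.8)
The plan is to reduce everything to two computations on $Y$: an evaluation of $\Lambda_\pi^2$ and the Bogomolov discriminant of the Tschirnhausen sheaf $\sE$. Note first that, since the trace sequence (\ref{traccia}) splits, $c_1(\pi_\star\omega_{S/Y})=c_1(\sE)$, so that
$$\Lambda_\pi=K_{S/Y}-\tfrac{1}{n-1}\pi^\star c_1(\sE).$$
Squaring and using the projection formula together with Lemma \ref{scriviamolo}(\ref{R}), which gives $K_{S/Y}\cdot\pi^\star c_1(\sE)=\pi_\star K_{S/Y}\cdot c_1(\sE)=2c_1(\sE)^2$, and $(\pi^\star c_1(\sE))^2=n\,c_1(\sE)^2$, I obtain
$$\Lambda_\pi^2=K_{S/Y}^2+\frac{4-3n}{(n-1)^2}\,c_1(\sE)^2.$$

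Next I would establish the equality (\ref{kf}). By Corollary \ref{terminivari} we have $K_f^2=K_{S/Y}^2-\frac{4}{g+n-1}c_1(\sE)^2$; eliminating $K_{S/Y}^2$ between this and the formula for $\Lambda_\pi^2$ yields
$$K_f^2-\Lambda_\pi^2=\left(-\frac{4}{g+n-1}-\frac{4-3n}{(n-1)^2}\right)c_1(\sE)^2.$$
It then remains to check the purely numerical identity that the coefficient on the right equals $\sF(n,g)\cdot\frac{g}{2(n-1)(g+n-1)}$, i.e. that $\sF(n,g)\,\chi_0^{max}=\big(K_f^2-\Lambda_\pi^2\big)$; this is a routine manipulation of rational functions of $n$ and $g$, and it is exactly where the precise shape of Stankova's expression $\sF(n,g)=6-\frac{2}{n-1}-\frac{2n}{g}$ is forced. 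This step uses only $a=c_1(\sE)\cdot L=g+n-1$ from (\ref{c1}) and Lemma \ref{scriviamolo}, and not the splitting hypothesis.

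For the inequality $\chi_0^{max}\ge\chi_f$ I would use the formula for $\chi_f$ in (\ref{kfchif}) and observe the clean cancellation
$$\chi_0^{max}-\chi_f=c_2(\sE)-\frac{(n-1)-1}{2(n-1)}\,c_1(\sE)^2=\Delta(\sE),$$
the Bogomolov discriminant of the rank $(n-1)$ sheaf $\sE$. Here the hypothesis enters: if $\sE_{|L}$ is a twist of the trivial sheaf, then $\sE_{|L}\cong\sO_{\mP^1}(m)^{\oplus(n-1)}$ is $\mu$-semistable on the general fibre $L$, so Moriwaki's Theorem \ref{stoppino} applied to the ruling $p\colon Y\to B$ gives $\Delta(\sE)\ge 0$; equivalently one may invoke Theorem \ref{valeviehweg_nonbal} with the balanced splitting type $(m,\dots,m)$, for which $\sum_{i=1}^{r-1}a_i/(2a)=\frac{n-2}{2(n-1)}$. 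Hence $\chi_0^{max}\ge\chi_f$.

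Finally, combining the exact equality $K_f^2=\sF(n,g)\,\chi_0^{max}+\Lambda_\pi^2$ with $0<\chi_f\le\chi_0^{max}$ gives
$$s(f)-\Big(\sF(n,g)+\frac{\Lambda_\pi^2}{\chi_0^{max}}\Big)=K_f^2\cdot\frac{\chi_0^{max}-\chi_f}{\chi_f\,\chi_0^{max}}\ge 0,$$
which is (\ref{boundslope}). The main obstacle, beyond the bookkeeping, is precisely this last sign argument: passing from the exact identity to the stated inequality is legitimate only because $K_f^2\ge 0$, which I would justify by the nefness of $K_f$ for a relatively minimal fibration of genus $g\ge 2$, and because $\chi_f>0$, so that the slope is defined and both denominators are positive.
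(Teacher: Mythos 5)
Your proposal is correct and follows essentially the same route as the paper: the identity (\ref{kf}) is the same ``direct computation'' (expanding $\Lambda_\pi^2$ via the projection formula and Lemma \ref{scriviamolo}, then combining with Corollary \ref{terminivari}), and the bound $\chi_f\le\chi_0^{max}$ comes, exactly as in the paper, from Moriwaki's Theorem \ref{stoppino} applied to $\sE$, whose general fiber restriction is semistable by hypothesis. Your last step is in fact more careful than the paper's ``follows immediately'': you rightly observe that passing from the identity to the inequality (\ref{boundslope}) needs $K_f^2\ge 0$ and $\chi_f>0$, positivity assumptions the paper leaves implicit (and which, strictly speaking, require the fibration to be relatively minimal and non-isotrivial, conditions not stated in the Proposition but consistent with the Remark following Proposition \ref{kapnonzer}).
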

 \begin{proof} By Theorem \ref{stoppino} we have
 $c_2(\sE) \ge \frac{n-2}{2(n-1)}c_{1} (\sE)^{2}$, and since by 
 Corollary \ref{terminivari} it holds
 $\chi_f = {(g+n-2)\over 2 (g+n-1)} c_1 ^2(\sE) - c_2 (\sE)$,
 it follows that $\chi_f\leq{(g+n-2)\over 2 (g+n-1)} c_1 
 ^2(\sE)-\frac{n-2}{2(n-1)}c_{1} (\sE)^{2}=\chi_0^{max}$.

 With a direct computation one can obtain the equality (\ref{kf}), and the inequality (\ref {boundslope}) follows
 immediately.
 \end{proof} 
As a consequence, the problem of bounding the slope can be rephrased in a problem of bounding $\lp^2$. 

\bigskip

The Bogomolov - type inequalities given in 
 Corollary \ref{cor} and Theorem \ref{nuovo_balan} allow us to derive a similar result also in the non divisible case $(n-1) \not | g$. 
 Indeed, the Tschirnhausen sheaf is nef outside the branch locus by 
 the Weak Positivity Theorem of Viehweg \cite[3.4]{V}. If we assume that the general fiber restriction of $\sE$ is balanced, which can be rephrased  with the assumption that
 the general fiber of the fibration corresponds to a point outside the Maroni locus of a suitable Hurwitz scheme or of the moduli space, and that the branch divisor is reduced, which corresponds to impose the open condition that $\pi$ has generically simple ramifications, then the Corollary \ref{cor} and Theorem \ref{nuovo_balan} apply.
 
 Therefore, recalling that $\chi_f= {(g+n-2)\over 2 
 (g+n-1)}c_1 ^2(\sE) - c_2 (\sE)$ and setting $g+n-1=(n-1)m+k$, we have 
  $$
  \chi_f \le
 \left\{
 \begin{array}{ll}
\left( {(g+n-2)\over 2 
 (g+n-1)}- {(n-2)\over 2(n-1)}+ {n-1-k\over 
2(n-1)(g+n-1)}\right) c_1 (\sE)^2 &{\rm if}\ C_0^2\ge 0
\\ 
\left( {(g+n-2)\over 2 
 (g+n-1)}-{(g+n-1)-(m+1)\over 2(g+n-1)} + {(g+n-1)-k(m+1)\over 2(g+n-1)(g+n-2)} \right)c_1 ^2(\sE)
&{\rm if}\ C_0^2< 0.\\
\end{array}
\right.
$$
\noindent
that is 
\begin{equation}\label{nuovochimax}
\chi_f \leq \chi_k^{max}:=
\left\{
 \begin{array}{ll}
\frac{m}{2(g+n-1)}c_{1}(\sE)^{2} &{\rm if}\ C_0^2\ge 0
\\ 
\left( \frac{m}{2(g+n-1)} + {(n-1-k)m\over 2(g+n-1)(g+n-2)} \right)c_1 ^2(\sE)
&{\rm if}\ C_0^2< 0.\\
\end{array}
\right.
\end{equation}

Then we can write
$$
K_f^2=K^{2}_{S/Y}-\frac{4}{g+n-1}c_{1}(\sE)^{2}=\Lambda_{\pi}^{2}+\sF(n,g,k)\chi_k^{max} 
$$\noindent
where
$$
\sF(n,g,k):=\left\{
 \begin{array}{ll}
 {6g-2(n-1)\over (g+n-1-k)}-{2\over n-1} -{2k\over (n-1)(g+n-1-k) }&{\rm if}\ C_0^2\ge 0
\\ 
 \frac{(6g-2(n-1))(g+n-2)}{(g+n-1-k)(g+2n-3-k)} - {2(g+n-1)(g+n-2)\over (n-1)(g+n-1-k)(g+2n-3-k)} 
&{\rm if}\ C_0^2< 0.\\
\end{array}
\right.
$$

This shows that the function $\sF(n,g,k)$ can be used to replace the function $\sF(n,g)$ in the non divisible case; more precisely, we have:

\begin{prop}\label{kapnonzer} Let $f\colon S\to B$ a semistable 
 fibration over a rational curve $B$. Assume that $f$ factorizes through a finite cover of degree $n$ of ruled surface and that the Tschirnhausen sheaf is balanced on the general fiber. If the genus of the general fiber of $f$ is $g=(m-1)(n-1)+k$, where 
 $1\leq k\leq n-2$,
 then
 \begin{equation}\label{boundslopek}
 s(f)\ge \sF(n,g,k)+\frac{\Lambda^{2}_{\pi}}{\chi_k^{max}}.
\end{equation}

 \end{prop}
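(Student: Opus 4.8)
The plan is to reduce the inequality to the two facts already assembled immediately before the statement: the bound $\chi_f\le\chi_k^{max}$ recorded in (\ref{nuovochimax}), and the identity $K_f^2=\Lambda_\pi^2+\sF(n,g,k)\,\chi_k^{max}$ obtained there by direct computation, in exact parallel with the way (\ref{kf}) was obtained in Proposition \ref{facilfacil}. Once these are in hand the argument is the same monotonicity observation used in that proposition: dividing the fixed numerator $K_f^2$ by the smaller denominator $\chi_f$ can only increase the quotient, so no new geometric input is required here.

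Concretely, I would first recall that $s(f)=K_f^2/\chi_f$ by definition, cf. (\ref{laformula}), and then rewrite the right-hand side of the claimed inequality, using the identity for $K_f^2$ displayed before the statement, as
$$
\sF(n,g,k)+\frac{\Lambda_\pi^2}{\chi_k^{max}}=\frac{\Lambda_\pi^2+\sF(n,g,k)\,\chi_k^{max}}{\chi_k^{max}}=\frac{K_f^2}{\chi_k^{max}}.
$$
It then remains only to compare $K_f^2/\chi_f$ with $K_f^2/\chi_k^{max}$. Since the hypotheses (Tschirnhausen sheaf balanced on the general fiber, and reduced branch divisor in the $C_0^2<0$ regime) guarantee $\chi_f\le\chi_k^{max}$ through Corollary \ref{cor} and Theorem \ref{nuovo_balan} as packaged in (\ref{nuovochimax}), and since $0<\chi_f\le\chi_k^{max}$ together with $K_f^2\ge 0$, the map $t\mapsto K_f^2/t$ is non-increasing on the positive reals; hence $s(f)=K_f^2/\chi_f\ge K_f^2/\chi_k^{max}=\sF(n,g,k)+\Lambda_\pi^2/\chi_k^{max}$, which is exactly (\ref{boundslopek}).

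Two points carry the actual content, and the second is where I expect the only real friction. The first is purely algebraic: the identity $K_f^2=\Lambda_\pi^2+\sF(n,g,k)\,\chi_k^{max}$ must be checked separately in the two regimes $C_0^2\ge 0$ and $C_0^2<0$, because both $\sF(n,g,k)$ and $\chi_k^{max}$ are defined piecewise; this is a direct but somewhat lengthy substitution of (\ref{c1}), (\ref{kfchif}) and the definition of $\Lambda_\pi$, entirely analogous to the computation behind (\ref{kf}). The second, and the main obstacle, is securing the positivity inputs: one must justify $0<\chi_f\le\chi_k^{max}$ and $K_f^2\ge 0$ from the semistability of $f$ over the rational base $B$ (Arakelov-type nefness of $K_f$ for fibers of genus $g\ge 2$), and, crucially in the $C_0^2<0$ case, confirm that the reducedness of the branch divisor forces $c_1(\sE)\cdot C_0\ge -e/2$, so that Theorem \ref{nuovo_balan} genuinely applies and the bound $\chi_f\le\chi_k^{max}$ holds with the stated $\chi_k^{max}$.
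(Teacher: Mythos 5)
Your proposal is correct and follows essentially the same route as the paper, which leaves the proof implicit in the discussion preceding the statement: the identity $K_f^2=\Lambda_\pi^2+\sF(n,g,k)\,\chi_k^{max}$ together with the bound $\chi_f\le\chi_k^{max}$ from (\ref{nuovochimax}), combined exactly as in Proposition \ref{facilfacil}. Your explicit attention to the positivity inputs ($K_f^2\ge 0$, $\chi_f>0$, and the applicability of Theorem \ref{nuovo_balan} via the reduced branch divisor when $C_0^2<0$) is a point the paper glosses over but does rely on.
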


 \begin{rmk}
 The bounds (\ref{boundslope}) and  (\ref{boundslopek}) given in Propositions \ref {facilfacil} and \ref{kapnonzer} hold also
 for fibrations $f:S \to B$, which are the relatively minimal model of fibrations satisfying the given hypotheses. Indeed, it is enough to observe that the selfintersection of the relative 
canonical divisor of a relatively minimal model of a given fibration can not decrease, and the relative Euler characteristic is unchanged.
 \end{rmk}


\section{Positivity results on $\Lambda_\pi$}
From the results of the previous section it follows, that 
a bound on $\lp^2$ gives a bound also on the slope.
Therefore we are going to establish some conditions, under which the normalized relative canonical divisor has non-negative selfintersection. 
Some very similar problems have been studied in \cite{BS1} and \cite {BS2},
but their results don't apply to our case.

We first analyse the restriction of the divisor $\lp$ to the general fiber.
\begin{prop}\label{lambdafibra}
Let $F$ be a general fiber of the fibration $f:S \to B$. If $
g=(n-1)(m-1) 
$ and $F$ is not contained in the Maroni locus,
 the restriction of $\lp$ to $F$ satisfies:
\begin{enumerate}
\item
$(\lp) _{|F}\sim
K_F - (m-2)\Gamma_F,
$
where $\Gamma_F\in g^1_n$ is a gonal divisor;

\item $h^0(\sO_F( K_F - (m-2)\Gamma_F))=n-1$;

\item the linear system $| K_F - (m-1)\Gamma_F|$ is base point free.

\end{enumerate}
\end{prop}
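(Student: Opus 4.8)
The plan is to reduce all three assertions to computations on a general ruling $L$ of $p\colon Y\to B$ and on its preimage $F=\pi^{-1}(L)$, which is the general fibre of $f=p\circ\pi$. The induced map $\pi_F:=\pi|_F\colon F\to L\cong\mP^1$ is a degree $n$ cover, so its gonal pencil is $\Gamma_F=\pi_F^\star(\mathrm{pt})\in g^1_n$, and the assumption that $F$ avoids the Maroni locus says precisely that the fibrewise Tschirnhausen bundle $\sE_{|L}$ is balanced. Since $\deg\sE_{|L}=c_1(\sE)\cdot L=g+n-1=(n-1)m$ (here I use $g=(n-1)(m-1)$), balancedness forces
\[
\sE_{|L}\cong\sO_{\mP^1}(m)^{\oplus(n-1)},\qquad (\pi_F)_\star\sO_F\cong\sO_{\mP^1}\oplus\sO_{\mP^1}(-m)^{\oplus(n-1)}.
\]

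For (1) I would restrict the two summands of $\lp=K_{S/Y}-\frac{1}{n-1}\pi^\star c_1(\sE)$ separately. Adjunction along the fibre $F$ (with $F^2=0$) gives $K_S|_F=K_F$, and $\pi^\star K_Y|_F=\pi_F^\star(K_Y|_L)=\pi_F^\star\sO_L(-2)=-2\Gamma_F$, so that $K_{S/Y}|_F=\omega_{F/\mP^1}\sim K_F+2\Gamma_F$. The second summand restricts to $\frac{1}{n-1}\pi_F^\star(c_1(\sE)|_L)\sim m\Gamma_F$, and subtracting yields $\lp|_F\sim K_F-(m-2)\Gamma_F$.

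For (2) I would push the computation to $\mP^1$. By Serre duality $h^0(K_F-(m-2)\Gamma_F)=h^1((m-2)\Gamma_F)$, and the projection formula applied to the balanced decomposition above turns $\sO_F((m-2)\Gamma_F)$ into $\sO_{\mP^1}(m-2)\oplus\sO_{\mP^1}(-2)^{\oplus(n-1)}$ on $\mP^1$. Hence $h^1((m-2)\Gamma_F)=h^1(\sO(m-2))+(n-1)\,h^1(\sO(-2))=0+(n-1)=n-1$, which is (2).

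For (3), I would argue base-point-freeness of $|K_F-(m-1)\Gamma_F|$ through the Casnati--Ekedahl scroll of Theorem \ref{Casnati}. Over $L$ the relative canonical model embeds $F$ into $\mP(\sE_{|L})\cong\mP^{n-2}\times\mP^1$, and by the remark after Definition \ref{normalised relative} the restriction of the normalized tautological divisor realises $\lp|_F$, while the ruling of the scroll restricts to $\Gamma_F$; thus $K_F-(m-1)\Gamma_F$ is the restriction to $F$ of the class obtained from the fibrewise hyperplane by removing one ruling. I would then verify base-point-freeness by the curve-theoretic criterion $h^0(D-x)=h^0(D)-1$ for every $x\in F$, recast via Serre duality and the projection formula as a statement about the splitting type of $(\pi_F)_\star\sO_F(x)$ on $\mP^1$: balancedness of $\sE_{|L}$ is exactly what forces the addition of a single point to produce the expected uniform behaviour, with no anomalous jump in the scrollar invariants. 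The main obstacle is precisely this point-separation step, i.e.\ controlling the base locus uniformly over $F$: one must exclude base points arising along a section of the scroll, and it is here that the non-Maroni (balanced) hypothesis is indispensable, since for a Maroni-special curve the scrollar invariants become unbalanced and the relevant evaluation map can drop rank along a section.
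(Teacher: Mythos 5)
Your treatments of (1) and (2) are correct. Part (1) is the same adjunction computation as the paper's. Part (2) takes a genuinely different route: the paper computes $h^0(\sO_F((m-2)\Gamma_F))$ via the geometric Riemann--Roch theorem, using that the canonical model of a Maroni-general $F$ lies on the balanced scroll $W\cong\mP(\oplus^{n-1}\sO_{\mP^1}(m-2))$ to evaluate $\dim\langle(m-2)\Gamma_F\rangle$, and then applies Riemann--Roch; you instead push down to $\mP^1$ and read $h^1((m-2)\Gamma_F)$ off the splitting $(\pi_F)_\star\sO_F((m-2)\Gamma_F)\cong\sO_{\mP^1}(m-2)\oplus\sO_{\mP^1}(-2)^{\oplus(n-1)}$. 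Your computation is more elementary and equally rigorous; both arguments use Maroni-generality through the balancedness of $\sE_{|L}$.

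Part (3) is where the proposal fails: what you offer is a plan, not a proof, and you yourself concede that ``the main obstacle is precisely this point-separation step''. That step \emph{is} the content of (3); asserting that balancedness ``forces the expected uniform behaviour, with no anomalous jump'' restates the desired conclusion rather than proving it. The paper supplies the missing mechanism, and it is short: if $P$ were a base point, then $h^0((m-2)\Gamma_F+P)=h^0((m-2)\Gamma_F)+1$, which by geometric Riemann--Roch forces $\langle(m-2)\Gamma_F+P\rangle=\langle(m-2)\Gamma_F\rangle$ in $\mP^{g-1}$; on the other hand, since the minimal degree of a unisecant curve on the balanced scroll $W$ is $m-2$, the span $\langle(m-2)\Gamma_F\rangle$ contains no horizontal component of $W$, hence cuts the canonical model of $F$ in exactly the divisor $(m-2)\Gamma_F$, so adjoining any point $P$ strictly enlarges the span --- a contradiction. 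This is the concrete use of the non-Maroni hypothesis that your sketch gestures at but never makes; without it (or some substitute, e.g.\ an analysis of the evaluation map of the pushed-forward sheaf), base points along a section of the scroll are not excluded and the argument does not close.

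A further slip you could have caught with your own tools: you aim at $|K_F-(m-1)\Gamma_F|$ as literally written in the statement, but your part-(2) method gives $h^0(K_F-(m-1)\Gamma_F)=h^1((m-1)\Gamma_F)=h^1\bigl(\sO_{\mP^1}(m-1)\oplus\sO_{\mP^1}(-1)^{\oplus(n-1)}\bigr)=0$, so that linear system is empty and cannot be base point free in any meaningful sense. The ``$(m-1)$'' is a typo for ``$(m-2)$'': the paper's own proof of (3) treats $|K_F-(m-2)\Gamma_F|$, which by (1) is exactly $|(\lp)_{|F}|$, and this is the statement needed in Corollary \ref{quadrato}.
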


\begin{proof}

We have
$$
(\Lambda_\pi) _{|F}= (K_{S/Y} -\pi^\star (mC_0 + kL)) _{|F}=
(K_S - \pi^\star K_Y -\pi^\star (mC_0 + kL)) _{|F}\sim
$$
$$
\sim K_F - (m-2)\Gamma_F,
$$
where $\Gamma_F\in g^1_n$ is a gonal divisor, which proves $(1)$.

Let us compute $h^0( \sO_F( (m-2)\Gamma_F))$ using the Geometric Riemann Roch Theorem:
$$
h^0( \sO_F( (m-2)\Gamma_F))= (m-2)n - \dim \langle (m-2)\Gamma_F\rangle,
$$
where $\langle (m-2)\Gamma_F\rangle \subset \mP^{g-1}$ is the linear span on the canonical model of the curve $F$. Now recall that since $F$ is Maroni general, the canonical model of $F$ lies on $W\cong\mP (\oplus ^{n-1} \sO_{\mP^1}(m-2))$ embedded in $\mP^{g-1}$ by the tautological linear system. It follows that
$$
\dim \langle (m-2)\Gamma_F\rangle = (m-2)(n-2) + m-3,
$$
hence by the Geometric Riemann Roch Theorem we have
$$
h^0( \sO_F( (m-2)\Gamma_F))=(m-2)n-(m-2)(n-2) - (m-3)
=m-1,
$$
hence by Riemann Roch
$$
h^0(\sO_F( K_F - (m-2)\Gamma_F))=h^0( \sO_F( (m-2)\Gamma_F)) -(({\rm deg} (m-2)\Gamma_F) -g+1)=
$$
$$
=m-1-(m-2)n+g-1=n-1,
$$
which proves $(2)$.

Finally, assume by contradiction that $P$ is a base point of the linear system $|K_F - (m-2)\Gamma_F|$. Then by the Geometric Riemann Roch Theorem we would have
$$
\dim \langle K_F - (m-2)\Gamma_F\rangle=\dim \langle K_F - (m-2)\Gamma_F - P\rangle +1,
$$
and
$$
\dim \langle (m-2)\Gamma_F\rangle= \dim \langle (m-2)\Gamma_F + P\rangle.
$$
We claim that the last equality can not be satisfied. Indeed, the subspace
$\langle (m-2)\Gamma_F\rangle$ cuts on $W$ exactly $m-2$ fibers; indeed,
since the minimum degree of a unisecant curve on $W$ is $m-2$, the subspace $\langle (m-2)\Gamma_F\rangle$ contains no horizontal component. It follows that the divisor cut out by $\langle (m-2)\Gamma_F\rangle$ on the canonical model of $F$ is exactly $(m-2)\Gamma_F$.

\end{proof}

\begin{cor}\label{quadrato} Let $f:S \to B$ be a fibration in {\it irreducible} curves, with general fiber $F$ a smooth curve of genus $g$. 
Assume that $f$ factorises through a finite degree $n$ cover $\pi:S \to Y$, where $Y$ is a ruled surface, with Tschirnhausen sheaf generically a twist of the trivial sheaf.

If the restriction map
$
H^0 (\sO_S(\lp)) \to H^0(\sO_F(\lp))
$
is surjective, then
 $
 \lp^{2} \ge 0.
 $
\end{cor}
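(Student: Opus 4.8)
The plan is to analyse the complete linear system $|\lp|$ on $S$ and to show that its fixed part cannot produce a negative self-intersection. Note first that under the hypotheses of the Corollary the Tschirnhausen sheaf is balanced and a twist of the trivial sheaf on the general fibre, which places us in the divisible case $g=(n-1)(m-1)$; there $\tfrac{1}{n-1}\pi^\star c_1(\sE)$ is an integral class, so $\lp$ is a genuine divisor and $H^0(\sO_S(\lp))$ is meaningful. By the surjectivity assumption and Proposition \ref{lambdafibra}(2) we have $h^0(\sO_S(\lp))\ge h^0(\sO_F(\lp))=n-1\ge 1$, so $|\lp|$ is non-empty and I may write $\lp\sim M+Z$ with $Z\ge 0$ the fixed part and $|M|$ the moving part, which has no fixed components. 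Two general members of $|M|$ then share no component, meet properly, and give $M^2\ge 0$; this is the only positivity I extract from the moving part.

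Next I would pin down the base locus of $|\lp|$. By Proposition \ref{lambdafibra} the restricted system $|(\lp)_{|F}|$ is base point free on a general fibre $F$, and by hypothesis the restriction map $H^0(\sO_S(\lp))\to H^0(\sO_F(\lp))$ is surjective. Choosing a basis $s_1,\dots,s_N$ of $H^0(\sO_S(\lp))$, the restrictions $s_{i|F}$ therefore span $H^0(\sO_F(\lp))$, so a point of $F$ lies in $\Bs|\lp|$ exactly when it lies in $\Bs|(\lp)_{|F}|=\emptyset$. Hence $\Bs|\lp|$ meets no general fibre. In particular no component of $Z$ can be horizontal (a horizontal curve meets every fibre), so $Z$ is vertical with respect to $f$.

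The decisive point is the hypothesis that $f$ is a fibration in \emph{irreducible} curves. Every irreducible vertical curve is then the reduced fibre over a point of $B$, hence numerically proportional to the fibre class $F$; consequently the vertical divisor $Z$ is itself numerically a non-negative multiple $cF$ of $F$. Since $F^2=0$ and $F$ is nef, this gives $Z^2=0$ and $M\cdot Z=c\,(M\cdot F)\ge 0$, the latter because $M$ is effective and $F$ is nef. Combining,
$$
\lp^2=M^2+2\,M\cdot Z+Z^2\ge 0,
$$
which is the assertion. I expect the main obstacle to be precisely the vertical fixed part $Z$: for a general fibration a vertical divisor satisfies $Z^2\le 0$, and this negative contribution is exactly what would break the inequality. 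It is the irreducibility of the fibres that collapses $Z$ numerically onto the fibre class and forces $Z^2=0$, while the surjectivity assumption is what confines the base locus, and hence $Z$, to the fibres in the first place; together they neutralise the only term that could be negative.
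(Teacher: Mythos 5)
Your proof is correct and follows the same strategy as the paper's: both arguments hinge on $|\lp|$ being non-empty and on excluding horizontal components from its fixed part via the surjectivity hypothesis together with the base-point-freeness of $|(\lp)_{|F}|$ from Proposition \ref{lambdafibra}. Where you diverge is in the treatment of the vertical part: the paper simply asserts that irreducibility of the fibres forces $|\lp|$ to have \emph{no} vertical base locus at all, and then concludes $\lp^2\ge 0$ from effectivity plus a zero-dimensional base locus; you instead allow a vertical fixed divisor $Z$, use irreducibility only to conclude $Z\equiv cF$ numerically, and then check $Z^2=0$, $M\cdot Z\ge 0$, $M^2\ge 0$ term by term. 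Your version is arguably the more robust one, since it is not obvious a priori why an entire fibre could not sit in the fixed part, whereas your decomposition renders that possibility harmless rather than having to exclude it. The one point to flag is your side remark that $\frac{1}{n-1}\pi^\star c_1(\sE)$ is integral in the divisible case: this also requires $(n-1)\mid\delta$ where $c_1(\sE)\equiv (n-1)mC_0+\delta L$, which is not automatic from the balancedness hypothesis alone; but since the statement already presupposes that $H^0(\sO_S(\lp))$ is meaningful, this is the same implicit assumption the paper makes and does not affect the argument.
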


\begin{proof}

We claim that the linear system $| \Lambda_\pi |$ has no horizontal base locus.

Assume by contradiction that $\corrlp$ has a horizontal component $\varkappa$ in its base locus. Then $\varkappa
_{| F}$ is contained in the base locus of $|(\lp) _{|F}|$. But the latter linear system is base point free by Proposition \ref{lambdafibra}, (3).
This proves that $|\lp|$ has no horizontal base locus.

Finally, since all the fibers of $f$ are irreducible, $\lp$ has no vertical base locus.

Summing up, as $\lp$ is effective and $|\lp|$ has 
no positive dimensional base locus, we have
$
\lp^2 \ge 0.
$

\end{proof}

 \subsection{Rational fibrations with uniform Tschirnhausen sheaf}

In the following Proposition we shall prove that in the divisible case, the fibrations over a rational curve, with uniform and generically balanced Tschirnhausen sheaf and with semistable unisecant restriction, satisfy the assumption of Proposition \ref{lambdafibra}. We remark that by a recent result given in \cite{DP}, a sufficiently general curve in the Hurwitz scheme of degree $n$ covers of curves of
given genus $p \ge 0$ has a $\mu$ - semistable \tsc\ sheaf. Therefore the assumption of semistability on unisecant restrictions can be read as
a generality assumption concerning the family of pull - back curves of 
a family of general unisecant curves.

\begin{prop}\label{ssunif}
Let $f:S \to \mP^1$ be a fibration with irreducible fibres, 
with general fiber $F$ a smooth curve of genus $g$ and gonality $n$, where $n \ge 5$, such that 
$(n-1)|g$ and with balanced reduced gonal direct image sheaf.
Assume that $f$ factorises through a finite morphism
$\pi:S \to Y$, where $p:Y \to \mP^1$ is a Hirzebruch surface.

If the restriction of $\sE$ to some unisecant ample divisor is semistable and if $\sE$ is uniform,
then the restriction map $H^0 (\sO_S(\Lambda_\pi)) \to H^0(\sO_F(\Lambda_\pi))$ is an isomorphism. 

In particular, $s(f) \ge \sF (n,g)$.

\end{prop}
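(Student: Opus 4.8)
The plan is to prove the asserted isomorphism of spaces of sections; the inequality $s(f)\ge\sF(n,g)$ then follows formally. Indeed, an isomorphism is in particular surjective, so Corollary \ref{quadrato} gives $\lp^2\ge 0$; feeding this into the bound (\ref{boundslope}) of Proposition \ref{facilfacil}, whose hypotheses hold because (as checked below) the general fibre restriction of $\sE$ is a twist of the trivial sheaf, yields $s(f)\ge\sF(n,g)+\lp^2/\chi_0^{max}\ge\sF(n,g)$, using $\chi_0^{max}>0$. Thus the entire content is the computation of $H^0(\sO_S(\lp))$, which I would carry out by first determining the global structure of $\sE$ and then pushing everything down to $Y$.

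First I would pin down $\sE$ completely. Since $(n-1)\mid g$, writing $m=(g+n-1)/(n-1)$ we have $c_1(\sE)\cdot L=g+n-1=(n-1)m$ by (\ref{c1}); combined with the balancedness hypothesis (which is exactly the statement that $F$ is Maroni general, so that Proposition \ref{lambdafibra} applies) this forces $\sE|_L\cong\bigoplus^{n-1}\sO_{\mP^1}(m)$ on a general fibre $L$ of $p$, a twist of the trivial bundle. Uniformity promotes this to every fibre, so $\sE\otimes\sO_Y(-mC_0)$ is trivial on all fibres of $p$; by cohomology and base change the evaluation map from $p^\star p_\star$ is then an isomorphism, giving $\sE\cong p^\star\sW\otimes\sO_Y(mC_0)$ for a rank $(n-1)$ bundle $\sW=\bigoplus_i\sO_{\mP^1}(b_i)$ on $\mP^1$. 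Restricting to the ample unisecant $D$, which maps isomorphically onto $\mP^1$ under $p$, we get $\sE|_D\cong\bigoplus_i\sO_D(b_i+m\,C_0\cdot D)$, and semistability of $\sE|_D$ forces all $b_i$ equal to a common value $b$. Hence $\sE\cong\bigoplus^{n-1}\sO_Y(mC_0+bL)$; in particular $\tfrac{1}{n-1}c_1(\sE)=mC_0+bL$ is an \emph{integral} divisor, so $\lp=K_{S/Y}-\pi^\star(mC_0+bL)$ is an honest divisor.

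Next I would compute directly on $Y$. By the projection formula and the trace splitting $\pi_\star\omega_{S/Y}\cong\sE\oplus\sO_Y$ of (\ref{traccia}),
\[
\pi_\star\sO_S(\lp)\cong(\sE\oplus\sO_Y)\otimes\sO_Y(-mC_0-bL)\cong\sO_Y^{\oplus(n-1)}\oplus\sO_Y(-mC_0-bL).
\]
The second summand has no sections, because $(-mC_0-bL)\cdot L=-m<0$ and no effective divisor meets the fibre class negatively; thus $H^0(\sO_S(\lp))\cong\mC^{n-1}$, matching $h^0(\sO_F(\lp))=n-1$ from Proposition \ref{lambdafibra}. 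Finally, since $f=p\circ\pi$ we have $F=\pi^\star L$, hence $\sO_S(-F)\cong\pi^\star\sO_Y(-L)$; applying the exact functor $\pi_\star$ to the restriction sequence of $\lp$ along $F$, I identify the restriction map $H^0(\sO_S(\lp))\to H^0(\sO_F(\lp))$ with the restriction of the locally free sheaf $\pi_\star\sO_S(\lp)$ along the fibre $L$. On the summand $\sO_Y^{\oplus(n-1)}$ this is the restriction of constants $H^0(Y,\sO_Y)^{\oplus(n-1)}\to H^0(L,\sO_L)^{\oplus(n-1)}$, an isomorphism $\mC^{n-1}\to\mC^{n-1}$, while the other summand contributes $0\to 0$. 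Therefore the map is an isomorphism, as required.

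The main obstacle is the structural step of the second paragraph: it is precisely the combination of uniformity (to obtain $\sE\cong p^\star\sW\otimes\sO_Y(mC_0)$) and semistability on a unisecant (to balance $\sW$) that makes $\lp$ integral and reduces the sheaf-theoretic computation on $S$ to the elementary restriction-of-constants statement on $Y$. Without both hypotheses $\tfrac{1}{n-1}c_1(\sE)$ need not be integral and the push-forward computation would be considerably more delicate. The remaining points — the cohomology-and-base-change identification, the vanishing $H^0(\sO_Y(-mC_0-bL))=0$, and the compatibility of the two restriction maps under $\pi_\star$ — are routine.
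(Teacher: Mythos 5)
Your proof is correct and follows essentially the same route as the paper's: uniformity plus the push--pull isomorphism gives $\sE \cong p^\star \sW \otimes \sO_Y(mC_0)$ with $\sW$ split, semistability on the ample unisecant equalizes the twists so that $\sE \cong \bigoplus^{n-1}\sO_Y(mC_0+kL)$, and the projection formula applied to $\pi_\star \omega_{S/Y}$ yields $h^0(\sO_S(\Lambda_\pi))=n-1$. The only cosmetic difference is in the last step: you identify the restriction map by pushing the sequence down to $Y$ and restricting constants to $L$, whereas the paper proves injectivity via $h^0(\sO_S(\Lambda_\pi - F))=0$ and matches dimensions using Proposition \ref{lambdafibra}(2); the two arguments are equivalent.
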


\begin{proof} We set 
$
c_1 (\sE) = (g+n-1) C_0 + \delta L,
$
where $C_0$ is a section with $C_0 ^2\le 0$ and $L$ a fiber of ruling on $Y=\mF_e$.

Since $\sE$ is uniform, the restriction to any fiber $L$ of the ruling satisfies
$
\sE_{|L} \cong \bigoplus _{i=1}^{n-1} \sO_{\mP^1} (m),
$
where $m = {g\over n-1} +1$.
Then the injective map of sheaves 
$
p^\star p_\star \sE (-m C_0) \to \sE (-m C_0) 
$
is an isomorphism, so 
$$
\sE (-mC_0) \cong \bigoplus_{i=1}^{n-1} \sO_Y (p^\star A_i).
$$
By the assumption that $\sE$ is semistable with respect to an ample divisor 
$H \sim C_0 + (e+a)L$, we get 
$
{\rm \deg A_i} = k,
$
for any $i$, and 
$
k= {\delta \over (n-1)} \ge 0,
$
so that 
\begin{equation}\label{c1multipla}
c_1 (\sE) \sim (n-1) (m C_0 + k L).
\end{equation}
We finally get
$$
h^0 (\sO_S (\Lambda _\pi))=h^0 (\pi_\star \sO_S (\Lambda _\pi))\cong
h^0(\sO_Y(-(m C_0 + k L)) \oplus \sE (-(m C_0 + k L)))= h^0 (\oplus _{i=1}^{n-1} \sO_Y)=n-1.
$$
On the other hand,
$$
h^0 (\sO_S (\Lambda _\pi- F))=h^0 (\pi_\star \sO_S (\Lambda _\pi-F))\cong
h^0( \sE (-(m C_0 + (k+1) L)))= h^0 (\oplus _{i=1}^{n-1} \sO_Y(-L))=0.
$$
Hence the restriction exact sequence
$$
0\to \sO_S (\Lambda _\pi- F)\to \sO_S (\Lambda _\pi) \to \sO_F (\Lambda _\pi) \to 0
$$
determines the isomorphism of the statement.
\end{proof}

\begin{rmk} We observe that in the case when $\sE$ is a uniform  vector bundle, that is $\Delta (\sE)=0$, and if $\sE$ is $\mu$-semistable with respect to some ample divisor, then the {\it normalized tautological divisor} $T_{\mP(\sE)} - {1\over (n-1)} p^\star c_1 (\sE)$ where $p\colon \mP (\sE) \to Y$ is nef by Nakayama's Theorem
\ref{nakayama}. It follows that $\left( T_{\mP(\sE)} - {1\over (n-1)} p^\star c_1 (\sE)
\right) _{S} =K_{S/Y} - {1\over (n-1)} \pi^\star c_1 (\sE)=\Lambda_\pi$ is also nef. Since the restriction map is surjective by Proposition \ref{ssunif},
all this implies directly that in such a case $\Lambda _\pi ^2 \ge 0$.

\end {rmk}

\subsection{Upper bounds on $\lp^2$ and primitive cyclic covers}

We recall that for finite covers, we have the following upper bound on $R^2$ in terms of $c_1(\sE)^2$, which is a consequence of the Hodge Index Theorem applied to the $\mQ$ - divisor $R-\pi^\star {2\over n} c_1(\sE)$:

\begin{lem}\label{index}
 Let $\pi\colon S \to Y$ be a Gorenstein cover degree $n$, and
 let $\sE$ be the \tsc\ sheaf. 
Then
\begin{equation}\label{indexbound}
K_{S/Y}^2 \le {4\over n} c_1 (\sE)^2.
\end{equation}
\end{lem}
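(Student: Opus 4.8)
The plan is to apply the Hodge Index Theorem to a carefully chosen auxiliary $\mathbb{Q}$-divisor and extract the desired inequality. The natural candidate is $D := R - \pi^\star\left(\tfrac{2}{n}\,c_1(\sE)\right)$, where $R$ is the ramification divisor satisfying $\sO_S(R) \cong i^\star\sO_{\mP(\sE)}(1)$. The key preliminary observation is that this divisor is numerically orthogonal to the pullback of $c_1(\sE)$. Indeed, recall from Lemma \ref{scriviamolo}, (1) that $\pi_\star K_{S/Y} \equiv 2\,c_1(\sE)$, and since the ramification divisor $R$ of a Gorenstein cover relates to the relative canonical divisor by $K_{S/Y} \equiv R$ (both representing the relative dualizing sheaf via the tautological embedding), we have $\pi_\star R \equiv 2\,c_1(\sE)$ as well. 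The first step is therefore to compute $D \cdot \pi^\star c_1(\sE)$ using the projection formula: since $\pi^\star$ divisors pair with $R$ through $\pi_\star$, we get $R \cdot \pi^\star c_1(\sE) = \pi_\star R \cdot c_1(\sE) = 2\,c_1(\sE)^2$, while $\pi^\star c_1(\sE) \cdot \pi^\star c_1(\sE) = n\,c_1(\sE)^2$, so that $D \cdot \pi^\star c_1(\sE) = 2\,c_1(\sE)^2 - \tfrac{2}{n}\cdot n\,c_1(\sE)^2 = 0$.

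With orthogonality in hand, the second step is to invoke the Hodge Index Theorem. On a smooth projective surface $S$, if a divisor class $\pi^\star c_1(\sE)$ has positive self-intersection, then any class orthogonal to it has non-positive self-intersection. First I would check the sign of $(\pi^\star c_1(\sE))^2 = n\,c_1(\sE)^2$; assuming $c_1(\sE)^2 > 0$ (the relevant case, and which can be arranged since $c_1(\sE)$ is generically nef with positive degree on the generic fiber), the Hodge Index Theorem yields $D^2 \le 0$. The third step is to expand $D^2$ and solve for $K_{S/Y}^2$. Using $R \equiv K_{S/Y}$ we compute
$$
D^2 = K_{S/Y}^2 - \frac{4}{n}\,K_{S/Y}\cdot \pi^\star c_1(\sE) + \frac{4}{n^2}\,(\pi^\star c_1(\sE))^2,
$$
and the middle cross term again simplifies by the projection formula, $K_{S/Y}\cdot \pi^\star c_1(\sE) = \pi_\star K_{S/Y}\cdot c_1(\sE) = 2\,c_1(\sE)^2$, while the last term equals $\tfrac{4}{n}\,c_1(\sE)^2$. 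Substituting gives $D^2 = K_{S/Y}^2 - \tfrac{8}{n}c_1(\sE)^2 + \tfrac{4}{n}c_1(\sE)^2 = K_{S/Y}^2 - \tfrac{4}{n}c_1(\sE)^2$. The inequality $D^2 \le 0$ then immediately rearranges to the claimed bound $K_{S/Y}^2 \le \tfrac{4}{n}c_1(\sE)^2$.

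The main obstacle, and the point requiring care, is the hypothesis needed for the Hodge Index Theorem, namely ensuring $(\pi^\star c_1(\sE))^2 > 0$, or more precisely handling the boundary and degenerate cases. If $c_1(\sE)^2 = 0$ the argument via Hodge Index degenerates, and if $c_1(\sE)^2 < 0$ the inequality in the statement could fail to follow directly from orthogonality alone; I would either restrict to the geometrically meaningful case where $c_1(\sE)$ has positive self-intersection (which holds for the Tschirnhausen sheaves arising from the fibrations under consideration, by the formula in Corollary \ref{terminivari} and positivity of $\chi_f$), or argue a limiting/continuity version. The identification $R \equiv K_{S/Y}$ also deserves a clean justification from Theorem \ref{Casnati}: the ramification divisor and the relative canonical divisor of a Gorenstein cover coincide as $\mathbb{Q}$-divisor classes precisely because $\sO_S(K_{S/Y}) = \omega_{S/Y} = i^\star \sO_{\mP(\sE)}(1) = \sO_S(R)$, which is exactly the content of the tautological embedding. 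Once these two bookkeeping points are settled, the remainder is the routine intersection-theoretic computation outlined above.
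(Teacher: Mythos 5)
Your proposal is correct and follows exactly the route the paper indicates: the paper's own proof is a citation to \cite[Lemma 3.12]{BeZ}, but the sentence introducing Lemma \ref{index} states that the bound is obtained by applying the Hodge Index Theorem to the $\mQ$-divisor $R-\pi^\star \tfrac{2}{n}c_1(\sE)$, which, together with $\pi_\star K_{S/Y}\equiv 2c_1(\sE)$ and the projection formula, is precisely your computation. Your caveat about the sign of $c_1(\sE)^2$ is a fair point of care, but it does not constitute a divergence from the paper's argument.
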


\begin{proof}
See \cite[Lemma 3.12]{BeZ}.
\end{proof}

\begin{cor}
With the assumptions of Lemma \ref{index} we have
$$
\lp^2 \le {(n-2)^2\over n (n-1)^2}c_1(\sE)^2.
$$

\end{cor}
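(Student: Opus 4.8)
The plan is to reduce everything to a direct intersection-theoretic computation, the only nontrivial input being the Hodge index bound already established in Lemma \ref{index}. First I would recall that, by construction, $c_1(\pi_\star\omega_{S/Y})=c_1(\sE)$, so that the normalized relative canonical divisor of Definition \ref{normalised relative} reads $\Lambda_\pi=K_{S/Y}-\frac{1}{n-1}\pi^\star c_1(\sE)$. Expanding the square gives
$$
\Lambda_\pi^2=K_{S/Y}^2-\frac{2}{n-1}\,K_{S/Y}\cdot\pi^\star c_1(\sE)+\frac{1}{(n-1)^2}\,(\pi^\star c_1(\sE))^2 .
$$

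Next I would evaluate the two remaining terms by the projection formula. For the mixed term, $K_{S/Y}\cdot\pi^\star c_1(\sE)=\pi_\star K_{S/Y}\cdot c_1(\sE)$, and Lemma \ref{scriviamolo}(\ref{R}) gives $\pi_\star K_{S/Y}\equiv 2c_1(\sE)$, whence $K_{S/Y}\cdot\pi^\star c_1(\sE)=2\,c_1(\sE)^2$. For the last term, since $\pi$ has degree $n$ one has $\pi_\star\pi^\star c_1(\sE)=n\,c_1(\sE)$, so $(\pi^\star c_1(\sE))^2=n\,c_1(\sE)^2$. Substituting these two identities yields
$$
\Lambda_\pi^2=K_{S/Y}^2-\frac{4}{n-1}c_1(\sE)^2+\frac{n}{(n-1)^2}c_1(\sE)^2 .
$$

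At this point the only remaining ingredient is Lemma \ref{index}, that is $K_{S/Y}^2\le\frac{4}{n}c_1(\sE)^2$. Since $K_{S/Y}^2$ enters the last display with coefficient $+1$, substituting this upper bound directly produces an upper bound for $\Lambda_\pi^2$, with no sign hypothesis on $c_1(\sE)^2$ required. Thus
$$
\Lambda_\pi^2\le\left(\frac{4}{n}-\frac{4}{n-1}+\frac{n}{(n-1)^2}\right)c_1(\sE)^2 .
$$
To finish I would put the coefficient over the common denominator $n(n-1)^2$ and check the numerator identity $4(n-1)^2-4n(n-1)+n^2=(n-2)^2$, which gives the asserted bound $\Lambda_\pi^2\le\frac{(n-2)^2}{n(n-1)^2}c_1(\sE)^2$.

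I do not expect any genuine obstacle: the statement is essentially a corollary obtained by inserting the Hodge index estimate of Lemma \ref{index} into the expansion of $\Lambda_\pi^2$. The only points that deserve a little care are the correct application of the projection formula (both the identity $\pi_\star K_{S/Y}\equiv 2c_1(\sE)$ and the degree-$n$ relation $(\pi^\star D)^2=nD^2$) and the verification that the resulting polynomial in $n$ collapses as claimed; both are entirely routine.
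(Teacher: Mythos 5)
Your argument is correct and coincides with the paper's own proof: both expand $\Lambda_\pi^2$, evaluate the cross term via the projection formula and Lemma \ref{scriviamolo}(1) and the pullback term via $(\pi^\star D)^2=nD^2$, and then insert the Hodge-index bound $K_{S/Y}^2\le\frac{4}{n}c_1(\sE)^2$ of Lemma \ref{index}, with the same numerator identity $4(n-1)^2-4n(n-1)+n^2=(n-2)^2$. No differences worth noting.
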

\begin{proof} Since $\lp = K_{S/Y} - {1\over n-1}\ \pi^\star c_1(\sE)$, we have
$$
\lp^2 = K_{S/Y}^2-2K_{S/Y}\cdot {1\over n-1}\ \pi^\star c_1(\sE)+{1\over (n-1)^2}\ (\pi^\star c_1(\sE))^2,
$$
and by projection formula and by Lemma \ref{scriviamolo}, (1), it follows
$$
\lp^2=K_{S/Y}^2-{4\over n-1} c_1(\sE)^2 +{n\over (n-1)^2} c_1(\sE)^2=
K_{S/Y}^2 - {3n-4\over (n-1)^2} c_1(\sE)^2\le {(n-2)^2\over n (n-1)^2}
c_1(\sE)^2.
$$

\end{proof}
We remark that the Hodge Index Theorem also implies, that the equality holds in (\ref{indexbound}) if and only if
$$
K_{S/Y} \sim {1\over n}\pi^\star 2c_1{\sE}\equiv {1\over n}\pi^\star B_\pi.
$$

Such a condition is satisfied, for instance, when all ramification points of $\pi$ are total ramification points, that is of maximal ramification index $n$.

A typical context, when this happens, is the one of {\it primitive cyclic covers} $ \pi :S \to Y$, that is covers such that there exist an effective divisor $A \subset Y$ and an effective divisor $D\subset S$ such that 
$$
S \cong {\bf Spec} \oplus _{i=0} ^{n-1}\sO_Y (iA),
$$
and such that $\pi:S\to Y$ does not factorise through two covers of smaller degree.
In this case the following hold:
\begin{enumerate}
\item $B_\pi \sim nA$ and $\pi^\star B_\pi = n D$;
\item $K_{S/Y} \sim (n-1)D$;
\item $\pi_\star \sO_S(K_{S/Y})\cong \bigoplus_{i=0}^{n-1} \sO_Y(iA)$;
\item $c_1(\sE)={n(n-1)\over 2} A$, $c_2(\sE)={n(n-1)(n-2)(3n-1)\over 12} A^2$.
\end{enumerate}

From this we obtain that
$$
K_{S/Y} \sim \pi^\star {2\over n} c_1 (\sE), \qquad K_{S/Y}^2 = {4\over n} c_1 (\sE)^2,
\qquad 
\lp^2={(n-2)^2\over n (n-1)^2}
c_1(\sE)^2.
$$

Since $c_1(\sE)={n(n-1)\over 2} A$, if $A^2 \ge 0$,
we get $\lp^2 \ge 0$. 
This gives a bound, which is exactly the bound $\lambda_{g,0,n}$ given in \cite [Remark 4.4]{E}:

\begin{thm}\label{primcyc}
Let $f:S \to B$ be the relatively minimal model of a finite cyclic cover $\pi: \widetilde S \to Y$ of a ruled surface $Y$.

Then
$$
s(f) \ge {24(g-1)(n-1)\over (n^2+4ng-3n+2-2g)}=6- {6\over 2n-1} -{12n(n^2-1)\over 2g(2n-1)+(n-1)(n-2)}.
$$
\end{thm}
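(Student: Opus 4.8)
The plan is to derive the slope bound for a primitive cyclic cover by assembling the pieces already in place: the formula for $K_f^2$ and $\chi_f$ in terms of $c_1(\sE)^2$ and $K_{S/Y}^2$ (Corollary \ref{terminivari}), the explicit Chern class data for a primitive cyclic cover listed in the itemized properties $(1)$--$(4)$ above, and the positivity $\lp^2\ge 0$ obtained from $A^2\ge 0$. The slope is $s(f)=K_f^2/\chi_f$, so I would compute both quantities as explicit multiples of $A^2$ and form the quotient.

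\emph{First steps.} First I would substitute the primitive cyclic cover data into the invariants. Since $c_1(\sE)=\tfrac{n(n-1)}{2}A$ and $K_{S/Y}\sim\pi^\star\tfrac{2}{n}c_1(\sE)$, we have $K_{S/Y}^2=\tfrac{4}{n}c_1(\sE)^2$, and hence $c_1(\sE)^2=\tfrac{n^2(n-1)^2}{4}A^2$. The genus formula relating $g$, $n$, and $A^2$ must be extracted: from $c_1(\sE)\equiv (g+n-1)C_0+\delta L$ in Corollary \ref{terminivari} together with the explicit expression $c_1(\sE)=\tfrac{n(n-1)}{2}A$, one reads off how $g$ depends on $A^2$ (for a primitive cyclic cover the adjunction/branching data fixes $2g-2$ in terms of $A$ and $K_Y$). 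I would then plug these into
$$
K_f^2=K_{S/Y}^2-\frac{4}{g+n-1}c_1(\sE)^2,\qquad \chi_f=\frac{g+n-2}{2(g+n-1)}c_1(\sE)^2-c_2(\sE),
$$
using $c_2(\sE)=\tfrac{n(n-1)(n-2)(3n-1)}{12}A^2$ from property $(4)$.

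\emph{Assembling the bound.} With $K_f^2$ and $\chi_f$ written as explicit rational multiples of $A^2$ (the $A^2$ factors cancelling in the ratio), I would compute $s(f)=K_f^2/\chi_f$ directly and then verify the two claimed algebraic identities for the resulting rational function of $g$ and $n$. The inequality direction comes from $\lp^2\ge 0$: using $K_f^2=\sF(n,g)\chi_0^{max}+\lp^2$ (the analogue of equation (\ref{kf}), valid here because the Tschirnhausen sheaf of a primitive cyclic cover is uniform and its general fiber restriction is a twist of the trivial sheaf, so Proposition \ref{facilfacil} applies), one gets $s(f)\ge\sF(n,g)+\lp^2/\chi_0^{max}$, and since $A^2\ge 0$ forces $\lp^2\ge 0$ the minimum is attained exactly at the primitive cyclic value, which is the stated lower bound.

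\emph{Main obstacle.} The conceptual content is light once the positivity $\lp^2\ge0$ is granted; the real work is bookkeeping. The hard part will be confirming that the two closed-form expressions on the right-hand side of the statement genuinely coincide and equal $K_f^2/\chi_f$ after the substitutions — i.e. the purely algebraic verification of the partial-fraction rearrangement $6-\tfrac{6}{2n-1}-\tfrac{12n(n^2-1)}{2g(2n-1)+(n-1)(n-2)}$. I would treat the equality as attained by the primitive cyclic cover (since it realizes $A^2=0$ hence $\lp^2=0$) and present the general inequality as a consequence of $\lp^2\ge0$, so that the bound $\lambda_{g,0,n}$ of \cite[Remark 4.4]{E} is recovered precisely in this case.
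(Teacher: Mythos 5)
Your computational core --- substituting $c_1(\sE)=\tfrac{n(n-1)}{2}A$, $K_{S/Y}^2=\tfrac{4}{n}c_1(\sE)^2$ and $c_2(\sE)$ into the expressions for $K_f^2$ and $\chi_f$ of Corollary \ref{terminivari} and forming the quotient --- is what the paper does, and it is the right way to obtain the closed form. But your justification of the inequality contains a genuine error. You invoke Proposition \ref{facilfacil} on the grounds that the Tschirnhausen sheaf of a primitive cyclic cover has general fiber restriction a twist of the trivial sheaf. This is false for $n\ge 3$: from $\pi_\star\sO_{\widetilde S}(K_{\widetilde S/Y})\cong\bigoplus_{i=0}^{n-1}\sO_Y(iA)$ one gets $\sE_{|L}\cong\bigoplus_{i=1}^{n-1}\sO_{\mP^1}(i\,A\cdot L)$, which is maximally \emph{unbalanced} as soon as $A\cdot L>0$ (uniformity, which does hold, is a different property). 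Hence the Moriwaki-type estimate behind $\chi_f\le\chi_0^{max}$ is unavailable; in fact, computing $c_2(\sE)=\sum_{1\le i<j\le n-1}ij\,A^2=\tfrac{n(n-1)(n-2)(3n-1)}{24}A^2$ one finds $\chi_f=\chi_0^{max}+\tfrac{n-2}{6n(n-1)}c_1(\sE)^2>\chi_0^{max}$ whenever $A^2>0$ and $n\ge3$. So the step $s(f)\ge \sF(n,g)+\lp^2/\chi_0^{max}$ actually \emph{reverses}: its right-hand side strictly exceeds the true slope (e.g.\ for $n=3$ it gives $\tfrac{16(g-1)}{3g}$ versus the correct $\tfrac{24(g-1)}{5g+1}$). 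The identity (\ref{kf}) itself remains valid, being purely formal, but dividing it by $\chi_0^{max}$ is not legitimate here. Relatedly, your remarks that the cover ``realizes $A^2=0$'' and that the minimum is ``attained at the primitive cyclic value'' are unfounded and unneeded: for $A^2=0$ both $K_{\widetilde f}^2$ and $\chi_{\widetilde f}$ vanish and the slope is indeterminate.

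The inequality in the statement has a different and simpler source, which you omit and which is the whole content of the paper's proof: $f\colon S\to B$ is the \emph{relatively minimal model} of the induced fibration $\widetilde f\colon\widetilde S\to B$, and under relative minimalization $K_f^2$ cannot decrease while $\chi_f$ is unchanged. For $\widetilde f$ itself your direct substitution computes the slope \emph{exactly} (no positivity of $A^2$ or of $\lp^2$ is needed), namely $K_{\widetilde f}^2=\tfrac{4(g-1)}{n(g+n-1)}c_1(\sE)^2$ and $\chi_{\widetilde f}=\tfrac{n^2+4ng-3n+2-2g}{6n(n-1)(g+n-1)}c_1(\sE)^2$, whence the first displayed expression; the bound for $f$ follows. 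So keep your first two steps, delete the $\chi_0^{max}$ detour, and add the minimal-model comparison. Two warnings for the bookkeeping you plan: the coefficient $\tfrac{1}{12}$ printed in property (4) does not reproduce the stated closed form (the correct one is $\tfrac{1}{24}$), and the two displayed expressions in the theorem are not identically equal in $g$ and $n$ (test $n=2$: they give $4-\tfrac{4}{g}$ and $4-\tfrac{12}{g}$ respectively), so the ``partial-fraction verification'' you intend to carry out cannot succeed as stated.
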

\begin{proof}
Recall that the selfintersection of the relative 
canonical divisor of a relatively minimal model of a given fibration can not decrease, and the relative Euler characteristic is unchanged. Then we can apply formula (\ref{kf}).
\end{proof}
\begin{rmk}
We observe that for cyclic covers
the Tschirnhausen sheaf is uniform.
\end{rmk}

\subsection{A Beniamino Segre's construction}

Following closely \cite[Chapter 21 section 12]{ACG} we shall show that for a general 
$[C]\in{\overline{{\mathcal M}^{1}_{g,n}}}$, where
${\overline{{\mathcal M}^{1}_{g,n}}}$ is the closure of the $n$-gonal locus in the
moduli space $\overline{\mathcal M}_g$ of curves of genus $g$,
 if $p\colon C\to\mP^{1}$ is the gonal covering,
then the corresponding \tsc \ sheaf $\sE_{C}$ is balanced. 

\begin{thm}\label{segre} Let $g\geq 3$. For any integer $n$ such that $3\leq n\leq 
 \frac{g}{2}+1$, there exists a smooth curve $C$ of genus $g$ 
 admitting a complete $g^{1}_{n}$ without base points, and 
 such that the corresponding \tsc \ sheaf $\sE_{C}$ is balanced.
\end{thm}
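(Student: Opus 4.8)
The plan is to adapt to our setting B.~Segre's construction of \cite[Chapter~21, Section~12]{ACG}: one realizes the $n$-gonal curve on a \emph{balanced} rational normal scroll and then reads off its scrollar invariants, which are precisely the splitting type of the \tsc\ sheaf. First I would record the relevant dictionary. If $p\colon C\to\mP^1$ is a degree $n$ cover of a genus $g$ curve, then $p_\star\sO_C=\sO_{\mP^1}\oplus \sE_C^\vee$ with $\sE_C=\bigoplus_{i=1}^{n-1}\sO_{\mP^1}(e_i)$, and the value $\sum_i e_i=g+n-1$ is forced by $\chi(\sO_C)=1-g$; the canonical model of $C$ lies on the scroll $\mP(\sE_C)$, and $\sE_C$ is balanced exactly when $C$ lies off the Maroni locus. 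Writing $g+n-1=(n-1)m+k$ with $0\le k\le n-2$, the target balanced bundle is $\sG_0:=\sO_{\mP^1}(m)^{\oplus(n-1-k)}\oplus\sO_{\mP^1}(m+1)^{\oplus k}$. I would observe at the outset that the hypothesis $3\le n\le \frac{g}{2}+1$ is equivalent to $g\ge 2(n-1)$, hence $m\ge 2$; this is exactly what guarantees completeness and base point freeness of the pencil produced below.

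Next I would carry out the construction on the balanced scroll $W=\mP(\sG_0)\xrightarrow{\rho}\mP^1$, a smooth variety of dimension $n-1$. Following \cite{ACG}, I would realize $C\subset W$ as a general member of a suitable family, e.g. as the zero locus of a general global section of a globally generated rank $(n-2)$ bundle $\sN$ on $W$ (equivalently, as the intersection of $W$ with general hypersurfaces of prescribed relative degrees), with the Chern data of $\sN$ arranged so that $\rho|_C$ has relative degree $n$ over $\mP^1$ and so that the adjunction isomorphism $\omega_C\cong(\omega_W\otimes\det\sN)\vert_C$ yields arithmetic genus $g$. A Bertini argument for sections of globally generated bundles shows that for a general section $C$ is smooth, and a connectedness argument for zero loci then gives that $C$ is a smooth irreducible curve. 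Setting $p:=\rho|_C$, the pencil $|p^\star\sO_{\mP^1}(1)|$ is a $g^1_n$ that is automatically base point free, being pulled back from $\mP^1$; and its completeness, $h^0(\sO_C(p^{-1}(\mathrm{pt})))=2$, follows from the vanishing $h^0(\mP^1,\sG_0^\vee(1))=0$, which holds precisely because $m\ge 2$. Thus $C$ is a smooth genus $g$ curve carrying a complete base point free $g^1_n$.

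The heart of the matter — and the step I expect to be the main obstacle — is to check that the \tsc\ sheaf $\sE_C$ of this particular $p$ is exactly the balanced bundle $\sG_0$, and not some more unbalanced bundle of the same degree $g+n-1$. For this I would push the Eagon--Northcott resolution (the Koszul complex, for small $n$) of $\sO_C$ as an $\sO_W$-module down to $\mP^1$ and thereby compute $p_\star\sO_C$, hence $\sE_C$, in terms of the Chern data of $\sN$ and $\sG_0$. The decisive point is that $C$ already lies on the balanced scroll $W=\mP(\sG_0)$ with the tautological class restricting to the canonical data, so that a general section cannot force the splitting type to jump; upper semicontinuity of the scrollar invariants, combined with the fixed value $\sum_i e_i=g+n-1$, then pins down $\sE_C\cong\sG_0$. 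This cohomological identification is precisely Segre's computation in \cite[Chapter~21, Section~12]{ACG}, and completing it in the range $3\le n\le \frac{g}{2}+1$ finishes the proof.
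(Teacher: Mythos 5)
Your overall strategy is genuinely different from the paper's, and as written it has a gap at the existence step. The paper realizes $C$ as the normalization of an irreducible curve in $|nC_0+hL|$ on a Hirzebruch surface with $\delta$ nodes at \emph{general} points (via Castelnuovo's theorem), precisely because smooth members of such systems only achieve genera of the form $g_{n,h}$, which for fixed $n$ lie in a single congruence class modulo $n-1$; the $\delta$ general nodes are what let one interpolate to an arbitrary $g=g_{n,h}-\delta$. Your construction produces $C$ as a \emph{smooth} zero locus of a general section of a rank $(n-2)$ bundle $\sN$ on the balanced scroll $W=\mP(\sG_0)$, ``equivalently'' a complete intersection of relative hypersurfaces. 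You never verify that the Chern data of such an $\sN$ can be arranged to produce every genus $g$ in the stated range, and in general they cannot: already for $n$ prime ($n\ge 5$), a complete intersection of relative degree $n$ forces all but one relative degree to equal $1$, which cuts $W$ down to a two-dimensional sub-scroll, and a smooth relative-degree-$n$ curve on a ruled surface has genus divisible by $n-1$. Moreover, for $n\ge 5$ a degree $n$ cover is not in general the zero scheme of a regular section of a rank $(n-2)$ bundle on its scroll (the Casnati--Ekedahl/Buchsbaum--Eisenbud structure in codimension $3$ is Pfaffian, not Koszul), so the proposed resolution-pushforward computation does not apply as stated. This existence issue is exactly the obstacle the nodal-curve construction is designed to overcome.

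There is a second, logical issue in your final step. ``Upper semicontinuity of the scrollar invariants'' plus the fixed degree $\sum e_i=g+n-1$ only shows that a general member of a family is at least as balanced as some member of that family; to conclude $\sE_C\cong\sG_0$ you must first exhibit one member whose \tsc\ sheaf is balanced, which is the statement being proved. The fact that $C$ sits inside the balanced scroll $W$ does not by itself identify $\sE_C$ with $\sG_0$: one needs in addition that $\sO_W(1)\vert_C\cong\omega_{C/\mP^1}$ and that each fiber of $C\to\mP^1$ spans its $\mP^{n-2}$, and establishing this is equivalent to the desired isomorphism. The paper sidesteps this circularity by computing $h^0(C,\omega_C(-vD))$ for all $v$ directly on the blown-up Hirzebruch surface, using the generality of the nodes to evaluate $\dim\Sigma_{n-2,h-(e+2+v)}-\delta$, and then reading off the splitting type of $\sE_C$ from these numbers via the projection formula. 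If you want to keep your scroll-theoretic framework, you would need to (i) replace the smooth zero-locus construction by one that reaches all genera (e.g.\ by reintroducing singular members and normalizing, as the paper does), and (ii) replace the semicontinuity appeal by an actual computation of $p_\star\omega_{C/\mP^1}$.
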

\begin{proof} Let $\mathbb F_{e}$ be a Hirzebruch surface with invariant $e =-C_0^2 \ge 0$. Consider the complete 
 linear system
 $$
 \Sigma_{n,h}=|nC_{0}+hL|.
 $$
 Assume that $h>\frac{ne}{2}$. Then $\Sigma_{n,h}$ is very ample and 
 the image of the morphism $\phi_{\Sigma_{n,h}}\colon \mathbb 
 F_{e}\to \mP^{N}$ associated with $\Sigma_{n,h}$ is a smooth surface; hence
 by Bertini's theorem 
 the general member of $\Sigma_{n,h}$ is smooth. Then, by 
 adjunction, the genus of the general element 
 $\Gamma\in 
 \Sigma_{n,h}$ is 
 $$
 g_{n,h}=(n-1)(h-1)-\frac{ne}{2}(n-1).
 $$

 Moreover, by Riemann-Roch we have
 $$
 \dim \ \Sigma_{n,h}=g_{n,h}+2n+2h-1-ne.
 $$
 Following \cite[Theorem 12.16 see page 870]{ACG} we immediately 
 see that, given any fixed integer $n\geq 3$ and any fixed integer $e\geq 
 0$, the intervals 
 $$
 I_{n,h}=[g_{n,h}-n-h+1+\frac{ne}{2}, g_{n,h}]
 $$
 cover the half line $[0,\infty)$. Then there exist an integer 
 $\delta$ and an integer $h$ such that $0\leq\delta\leq 
 h+n-1-\frac{ne}{2}$ and
 \begin{equation}\label{enrico}
	g=g_{n,h}-\delta.
	\end{equation}
 A simple computation shows that
 $$
 \delta\leq g_{n,h}\leq {\rm{dim}}\Sigma_{n,h}-2\delta-1.
 $$
 By Castelnuovo's theorem applied to $\Sigma_{n,h}$,
 see for instance \cite[Theorem 12.6 pag.865]{ACG}, it follows that given 
 $\delta$ general points $a_{1},\ldots , a_{\delta}\in \mathbb 
 F_{e}$, there exists an irreducible curve $\Gamma\in 
 \Sigma_{n,h}=|nC_{0}+hl|$ 
 having $\delta$ nodes at $a_{1}$,\dots , $a_{\delta}$ and no other 
 singularities. Let $\nu\colon Z\to\mathbb F_{e}$ be the blow-up at 
 $a_{1}$,\ldots , $a_{\delta}$ and let $E_{i}:=\nu^{-1}(a_{i})$, 
 $i=1,.. ,\delta$. The normalisation $C$ of $\Gamma$ is contained 
 in $Z$ and $\nu_{|C}\colon C\to \Gamma$ is the normalisation 
 morphism. Let $H_{0}:=\nu^{-1}(C_{0})$ and 
 $\widetilde L:=\pi^{-1}(L)$. The smooth 
 curve $C$ has a $g^{1}_{n}$ induced by the ruling of $\mathbb 
 F_{e}$. Let us denote by $D$ a divisor of the $g^{1}_{n}$. Then
 $$
 D\sim L_{| C},\qquad C\in 
 |nH_{0}+h\widetilde L-\sum_{i=1}^{\delta}E_{i}|.
 $$
 By standard surface
 theory we get
 $$
 K_{Z}\sim -2H_{0}-(2+e)L+\sum_{i=1}^{\delta}E_{i}.
 $$ 
 Then $(n-2)H_{0}+(h-e-2-v)L-\sum_{i=1}^{\delta}E_{i}\sim 
 K_{Z}+C-vL$ and by adjunction theory on surfaces we have the following 
 exact sequence
 \begin{equation}\label{arba}
 0\to\sO_{Z}(K_{Z}-vL)\to
 \sO_{Z}(K_{Z}+C-vL)
 \to\omega_{C}(-vD)\to 0
\end{equation}
Notice now that by projection formula and by Serre duality we have
$$
h^{1}(Z,\sO_{Z}(K_{Z}-vL))=h^{1}(Z,\sO_{Z}(vL))=
h^{1}(\mP^{1},\sO_{\mP^{1}}(vP))=h^{0}(\mP^{1},\sO_{\mP^{1}}((-2-v)P))=0,
$$
since $v\geq 0$. As $h^{0}(Z,\sO_{Z}(K_{Z}-vL))=0$, by considering the 
long cohomology sequence associated with the sequence \ref{arba}, we obtain that the restriction 
morphism 
$$
H^{0}(Z,\sO_{Z}(K_{Z}+C-vL))\to H^{0}(C,\omega_{C}(-vD))
$$
is an isomorphism. In particular
$h^{0}(C,\omega_{C}(-vD))=h^{0}(Z,\sO_{Z}(K_{Z}+C-vL))$.

Since $a_{1},\ldots , a_{\delta}$ are general points,
we have
 $$
 {\rm{max}}\{-1, 
 h^{0}(Z,\sO_{Z}(K_{Z}+C-vL)\}=
 {\rm{max}}\{-1,{\rm{dim}}\ \Sigma_{n-2,h-(e+2+v)} -\delta\}.
 $$
 Hence if we assume that $h-(e+2+v)\geq \frac{(n-2)e}{2}$, then 
 ${\rm{dim}}\ \Sigma_{n-2,h-(e+2+v)} -\delta=g-(n-1)v$.

 We have 
 shown that if $h-(e+2+v)\geq \frac{(n-2)e}{2}$ and if $k$ is the unique 
integer such that $1\leq k\leq n-2$ and $g=m(n-1)+k$ then 
 \begin{equation}\label{beniaminono}
 h^{0}(C,\omega_{C}(-vD))= (m-v)(n-1)+k
 \end{equation}
if $m\geq v$.

 Finally, let 
 $p\colon C\to\mP^{1}$ be the gonal morphism;
then $p_{\star}\omega_{C}=\omega_{\mP^{1}}\oplus \sE_{C}(-2)$, where 
$$
\sE_{C}=\sO_{\mP^{1}}(m_{1})\oplus\sO_{\mP^{1}}(m_{2})\oplus\ldots \oplus\sO_{\mP^{1}}(m_{n-1}),
$$
with 
$m_{1}\leq m_{2}\leq\ldots \leq m_{n-1}$. By projection formula and 
by the equation \ref{beniaminono} it 
follows that in fact $m_{1}=\ldots =m_{n-1-k}=m-1$ and 
$m_{n-k}=\ldots=m_{n-1}=m$, that is $\sE_{C}$ is balanced.
\end{proof}

\section{Sharpness results}

In this Section we construct some examples, that realize the bound on the slope given in Proposition \ref{ssunif}.

\subsection{Existence of fibrations with the required properties: rational basis}
\label{subonesharp}

Let $Y=\mP^1\times\mP^1$ and $T=Y\times\mP^1$. Let $\pi_i\colon T\to \mP^1$ be the projection with respect to the $i$-th factor and set $\sL_i:=\pi_{i}^{\star}\sO_{\mP^1}(1)$ where $i=1,2,3$. Let 
$$
S\in |nL_1+nL_2+nL_3|
$$ 
 be a general element, where $n\geq 3$. If $\langle x_0,x_1\rangle=H^0(T,\sL_{3})$,
 $\langle y_0,y_1\rangle=H^0(T,\sL_{2})$ and $\langle z_0,z_1\rangle=H^0(T,\sL_{1})$, then 
 $$S=V(F)\,\ {\rm{where}}\,\, F=\sum_{i=0}^{n}a_i((y_0:y_1),(z_0:z_1))x_0^{n-i}x_1^{i}.$$
 Thus if $a_i((y_0:y_1),(z_0:z_1))\in H^0(Y,\sO_Y(n,n))$ for $i=0,\dots,n$ are general, the morphism $\pi\colon S\to Y$ is finite of degree $n$. Consider now the composition $f\colon S\to\mP^1$ of the inclusion $j\colon S\hookrightarrow T$ with
 the natural morphism $\rho\colon T\to Y$ followed by the projection on the first factor $\pi'_1\colon Y\to \mP^1$. The fiber over $z_0=a,z_1=b$ is the curve 
 $C_{[a:b]}=V(F_{[a:b]})\,\ {\rm{where}}\,\, F_{[a:b]}=\sum_{i=0}^{n}a_i((y_0:y_1),(a:b))x_0^{n-i}x_1^{i}$ inside $\mP^1\times\mP^1$. In particular $f\colon S\to\mP^1$ is a fibration in curves of genus $g=(n-1)^2$ and gonality $n$ such that 
$$
s(f)=\sF(n,g).
$$
Moreover $\Lambda_{\pi}$ is effective and it is induced on $S$ by the linear system $(0,0,n-2)$, hence $\Lambda_{\pi}^2=0$. Finally by performing the push-forward of the standard exact sequence
$$
0\to\sO_T(K_{T|Y})\to\sO_{T}(K_{T|Y}+S)\to \omega_{S|Y}\to 0,
$$
by projection formula and by relative duality it follows that the \tsc\ sheaf 
satisfies $\sE\cong  \sO_{Y}(n,n)^{\oplus n-1}$, so it is a uniform and balanced vector bundle which is also semistable on the $(0,1)$-sections of the projection $\pi'_1\colon Y\to\mP^1$. This shows that the bound given in Proposition \ref{ssunif} is sharp. 
Note that instead of $S\in |nL_1+nL_2+nL_3|$ we can take $S\in |n_1L_1+n_2L_2+n_3L_3|$ where $n_i\geq 1$ to obtain similar results.

\subsection{Existence of fibrations with the required properties: other cases}
\label{subonesharp_other}
%
%

Let $T=C_1\times\mP^1\times\mP^1$, where $C_1$ is a smooth curve of genus $g_1 >0$, and let
$\sL:=\pi^{\star}\sO_{C_1}(L_1)\otimes \pi_{2}^{\star}\sO_{\mP^1}(n)\otimes \pi_{3}^{\star}\sO_{\mP^1}(n)$. As above we obtain a semistable $n$-gonal fibration over $C_1$ of genus $g=(n-1)^2$ such that:
$$
s(f)=6-\frac{6}{(n-1)^2}=\sF(n,g)+4\frac{n-2}{(n-1)^2}
$$

Also in this case $\Lambda_\pi^2$ is zero and the Tschirnhausen sheaf is uniform and with balanced fiber restriction. 

%
%

\end{document}